\theoremstyle{plain}
\newtheorem{thm}{\protect\theoremname}
\theoremstyle{plain}
\theoremstyle{plain}
\newtheorem{question}[thm]{\protect\questionname}
\theoremstyle{remark}
\theoremstyle{plain}
\newtheorem{lem}[thm]{\protect\lemmaname}
\theoremstyle{plain}
\newtheorem{prop}[thm]{\protect\propositionname}
\theoremstyle{remark}
\theoremstyle{remark}
\newtheorem{rem}[thm]{\protect\remarkname}
\theoremstyle{definition}
\newtheorem{defn}[thm]{\protect\definitionname}
\theoremstyle{definition}
\theoremstyle{plain}
\newtheorem{cor}[thm]{\protect\corollaryname}
\providecommand{\claimname}{\inputencoding{latin9}Claim}
\providecommand{\conjecturename}{\inputencoding{latin9}Conjecture}
\providecommand{\corollaryname}{\inputencoding{latin9}Corollary}
\providecommand{\definitionname}{\inputencoding{latin9}Definition}
\providecommand{\examplename}{\inputencoding{latin9}Example}
\providecommand{\lemmaname}{\inputencoding{latin9}Lemma}
\providecommand{\notename}{\inputencoding{latin9}Note}
\providecommand{\propositionname}{\inputencoding{latin9}Proposition}
\providecommand{\questionname}{\inputencoding{latin9}Question}
\providecommand{\remarkname}{\inputencoding{latin9}Remark}
\providecommand{\theoremname}{\inputencoding{latin9}Theorem}
\title[Generalized Permutohedra, Scattering Amplitudes, and a Cubic Three-fold]{Generalized Permutohedra, Scattering Amplitudes, and a Cubic Three-fold}
\author{Nick Early}
\thanks{The author was partially supported by RTG grant NSF/DMS-1148634\\
	email: \href{mailto:earlnick@gmail.com}{earlnick@gmail.com}}
\begin{document}

\maketitle

\begin{abstract}
	In this note, we apply combinatorial techniques from our Ph.D. thesis to study how generalized permutohedra may be represented functionally on Parke-Tayor factors and related rational functions.  In any functional representation of polyhedral cones, in general certain homological information may be lost.  The combinatorial relations of the Parke-Taylor factors lift homologically to generalized permutohedra.  
	
	The 6-particle case contains several related layers of interesting geometric data: the Newton polytope for the polynomial numerator lifts the permutohedron in three variables, which is a hexagon, and the fraction itself provides a functional representation of certain neighborhoods of a vertex of a 5-dimensional weight permutohedron.  The lift from fraction to generalized permutohedron was derived by comparing functional representations.  We observe additionally that the numerator and its permutations satisfy a degree 3 polynomial relation which defines a classical projective variety known as the Segre cubic.
	
	We include in an extended Appendix selected Mathematica code which can be used to verify our computations independently.

\end{abstract}
	\begingroup
\let\cleardoublepage\relax
\let\clearpage\relax
\tableofcontents
\endgroup

\section{Preface: how this work came about}
In the research announcement \cite{EarlyPlateAnnouncement}, results were announced from my Ph.D. thesis, which contained new techniques developed to study combinatorics and representation theory associated to permutohedral cones embedded in a dilated simplex.  The main result was to prove a conjecture that my thesis co-adviser, Adrian Ocneanu shared with me.

Ocneanu has studied permutohedra and their deformations using \textit{plates}, which are constructed from certain affine translations of cones which are dual to faces of the arrangement of reflection hyperplanes.  I learned about plates and some of their combinatorially rich linear relations through many intensive discussions with Ocneanu during my graduate work.  He is preparing a paper containing proofs which are based on his original computations.

In \cite{EarlyNonplanar} I introduce a more general framework from which these relations and more may be deduced.  Three bilinear operations on the algebra of characteristic functions of polyhedra are essential: duality, the pointwise product, and convolution of characteristic functions, the latter of which represents the Minkowski sum of polyedra.  It is known that duality exchanges pointwise product and convolution, see  \cite{BarvinokPammersheim} for details.  The present paper applies this theoretical framework to a specific question coming from physics.

In the computations below, certain ``restricted shuffles'' are invoked, summing fractions which implicitly represent signed characteristic functions of plates over all permutations which satisfy given orientations of pairs of labels.  An orientation is a pair $(i,j)$, which means geometrically that the plate contains the ray $\{c(e_i-e_j):c\ge 0\}$.  The result of the sum is a fraction which represents the polyhedral cone generated by the rays $e_i-e_j$ which are labeled by the given orientations $(i,j)$. The combinatorics which we apply below appears to be new, and deserves further attention.

A geometric model is presented in which the objects are characteristic functions of permutohedral cones and certain configurations of their faces.  Permutohedral cones can be represented on rational functions in various ways, and the Parke-Taylor factors, as cyclic products of differences of variables, appear in one of these representations.  Recently I learned about a related and very interesting paper \cite{HeSon} which was just posted to the arXiv.  There, in particular, some of the same restricted shuffles and their closed form fractions were used geometrically in a way which is essentially dual to the approach here.

In Fall, 2015 Nima Arkani-Hamed asked if I could explain geometrically certain positive sums of Parke-Taylor factors in \cite{Nonplanar} which when simplified collapse to a fraction which was shown to have a closed form expression.  The solution which I propose can be summarized briefly as follows: the sum and this fraction in particular could be understood to live in a functional representation of generalized permutohedral cones.  I give a combinatorial recipe for the construction, which in contrast with the methods of \cite{Nonplanar} involves ordered pairs rather than oriented triples.  An interesting feature here is that the ordered pairs label explicitly edge directions of the permutohedral cone.  The fraction and its permutations provide a functional representation of the weight permutohedron with vertices permutations of $(0,0,1,1,2,2)$.  We believe that there is a similar geometric interpretation for lists of 3-cycles which we shall investigate in a future publication.  

We also postpone to future work the treatment of functional representations of weight permutohedra other than those which we have considered in this paper, on Parke-Taylor factors and more generally.

\section{Acknowledgements}

I am grateful to Adrian Ocneanu for many hours of intensive discussions about plates and related topics.  I thank Berndt Sturmfels for useful conversions and guidance about representation theory and the Segre cubic, in the early stages of the work, and Luke Oeding for discussions related to tensor invariants and for help computing the polynomial in Proposition \ref{InvariantPolynomialX}.  I thank Igor Dolgachev for bringing \cite{DolgachevAlgebraicGeometry} to my attention and for help identifying the cubic relation as the Segre cubic, see Theorem \ref{SymmetricForm}.  I thank Freddy Cachazo for very interesting discussions related to the final version.  I am grateful to Nima Arkani-Hamed for discussions and for asking the question which brought this work together: if I could interpret geometrically the positive sums of Parke-Taylor factors in \cite{Nonplanar}.

\section{Introduction and Motivation}

We start with the two generating intertwiners for the group $SL_3$, the invariant tensors in the antisymmetrizations respectively $\Lambda^3(\mathbb{C}^3)$ and $\Lambda^3\left(\Lambda^2\mathbb{C}^3\right)$.  The usual permutohedron in three coordinates, a hexagon, is the Newton polytope of the Vandermonde determinant polynomial.  In what follows we study the dual determinant.  

Let $v_i=(u_i,v_i,w_i)\in\mathbb{C}^3$, $i=1,\ldots, 6$.  The polynomial $X_{12,34,56}$ obtained by pre-composing the dual determinant 
$$\det\left(v_1\times v_2,v_3\times v_4,v_5\times v_6\right)$$
with the Veronese map 
$$(x_1,\ldots x_6)\mapsto \begin{bmatrix}
1 & 1 & 1 & 1 & 1 & 1 \\ 
x_1 & x_2 & x_3 & x_4 & x_5 & x_6 \\ 
x_1^2 & x_2^2 & x_3^2 & x_4^2 & x_5^2 & x_6^2
\end{bmatrix}$$
has a five-dimensional Newton polytope which we view as the permutohedron for the wreath product of symmetric groups $\text{Wr}(S_2,S_3)=\left(S_2\times S_2\times S_2\right)\ltimes S_3$, which is the signed stabilizer for the polynomial $X_{12,34,56}$.

Here we study the action of the full symmetric group on generators for the irreducible representation $V_{(3,3)}$ of $S_6$, corresponding to the partition $3+3=6$, which is generated by the $15$ permutations of $X_{12,34,56}$, up to sign.  Namely, we note that the cyclic subgroup $G=\langle(123456)\rangle$ of $S_6$ generated by the Coxeter element $(123456)$ decomposes the $S_6$-orbit of the $X_{12,34,56}$ into equivalence classes.  It happens that two such equivalence classes, with respectively 2 and 3 elements, together form a basis.  

\begin{rem}
	The action of the cyclic group $G$ on the $X$ and $\mathcal{C}$ bases is reminiscent of the cyclic jeu-de-taquin action on standard Young tableaux of shape $(2,2,2)$ respectively $(3,3)$, which label different bases for the same space.  
\end{rem}

We were encouraged to finish this paper when we recognized the dual intertwiner $\mathcal{C}_{12,34,56}$ in the numerator of the positive sum of Parke-Taylor factors in \cite{Nonplanar}.  Subsequently, we found a new combinatorial formula, different from the one given in \cite{Nonplanar}, for the numerator of their expression for the $n=6$ particle case,
$$\text{PT}(\{1,2,3,4,5,6\})+\text{PT}(\{1,2,4,5,6,3\})+\text{PT}(\{1,4,2,5,6,3\})+\text{PT}(\{1,4,5,6,2,3\})$$
$$+\text{PT}(\{1,4,6,2,3,5\})+\text{PT}(\{1,4,6,2,5,3\})+\text{PT}(\{1,6,2,3,4,5\})=$$
{\Small
$$-\frac{\left(x_1 x_2 x_4-x_2 x_3 x_4-x_1 x_3 x_5+x_2 x_3 x_5-x_2 x_4 x_5+x_3 x_4 x_5-x_1 x_2 x_6+x_1 x_3 x_6-x_1 x_4 x_6+x_2 x_4 x_6+x_1 x_5 x_6-x_3 x_5 x_6\right){}^2}{\left(x_1-x_2\right) \left(x_1-x_3\right)
	\left(x_2-x_3\right) \left(x_1-x_4\right) \left(x_3-x_4\right) \left(x_1-x_5\right) \left(x_2-x_5\right) \left(x_4-x_5\right) \left(x_2-x_6\right) \left(x_3-x_6\right) \left(x_4-x_6\right) \left(x_5-x_6\right)},$$}
where the sum is over the set $S$ of all $6$-cycles containing the oriented 3-cycles
$$\{1,2,3\},\{2,5,6\},\{3,4,6\},\{4,5,1\}$$
as cyclic subwords, and where
$$\text{PT}(\left\{\sigma_1,\ldots,\sigma_n\right\})=\frac{1}{(x_{\sigma_1}-x_{\sigma_2})(x_{\sigma_2}-x_{\sigma_3})\cdots (x_{\sigma_6}-x_{\sigma_1})}.$$
is the cyclically invariant Parke-Taylor factor labeled by the cycle $\sigma$.

\begin{rem}
The numerator of the above fraction is equal to (the square of) the polynomial 
$$
\mathcal{C}_{16,24,35}=\det\begin{bmatrix}
1 & 1 & 1 \\
x_1+x_6 & x_2+x_4 & x_3+x_5 \\
x_1 x_6 & x_2 x_4 & x_3 x_5 \\
\end{bmatrix}
$$
{\Small
$$=x_1 x_2 x_4-x_2 x_3 x_4-x_1 x_3 x_5+x_2 x_3 x_5-x_2 x_4 x_5+x_3 x_4 x_5-x_1 x_2 x_6+x_1 x_3 x_6-x_1 x_4 x_6+x_2 x_4 x_6+x_1 x_5 x_6-x_3 x_5 x_6,$$}
see also \cite{DolgachevAlgebraicGeometry}.
\end{rem}

\section{Generalized Permutohedral Cones, From Plates}

We recover the same non-planar Parke-Taylor factor using ordered pairs instead of oriented 3-cycles.  From our perspective this is geometrically very suggestive, as here each ordered pair represents an (oriented) generating edge (a root of type $SU(n)$) of a polyhedral cone.  

Given a permutation $\sigma$, let $p_i=\sigma^{-1}(i)$ denote the position of $i$ in $\sigma$ written in line notation.  Let $P$ be the set of all line permutations $\sigma$ satisfying the conditions 
$$p_1<p_2,\ p_1<p_4,\ p_6<p_2,\ p_6<p_4,$$
$$p_2<p_3,\ p_2<p_5,\ p_4<p_3,\ p_4<p_5,$$
or more compactly,
$$\{p_1,p_6\}<\{p_2,p_4\}<\{p_3,p_5\}.$$

The resulting sum can be formulated very simply as the average over the group $(\mathbb{Z}\slash 2)^{\times 3}$ acting on adjacent pairs of index positions, and after algebraic simplification we obtain the same fraction as above, with the 3-cycle condition from \cite{Nonplanar}, namely
$$\text{PT}(\{1,6,2,4,3,5\})+\text{PT}(\{1,6,2,4,5,3\})+\text{PT}(\{1,6,4,2,3,5\})+\text{PT}(\{1,6,4,2,5,3\})+$$
$$\text{PT}(\{6,1,2,4,3,5\})+\text{PT}(\{6,1,2,4,5,3\})+\text{PT}(\{6,1,4,2,3,5\})+\text{PT}(\{6,1,4,2,5,3\})=$$
{\Small $$\frac{\left(x_1 x_2 x_4-x_2 x_3 x_4-x_2 x_5 x_4+x_3 x_5 x_4-x_1 x_6 x_4+x_2 x_6 x_4-x_1 x_3 x_5+x_2 x_3 x_5-x_1 x_2 x_6+x_1 x_3 x_6+x_1 x_5 x_6-x_3 x_5 x_6\right){}^2}{\left(x_1-x_2\right) \left(x_1-x_3\right) \left(x_2-x_3\right) \left(x_1-x_4\right) \left(x_3-x_4\right) \left(x_1-x_5\right) \left(x_2-x_5\right) \left(x_4-x_5\right) \left(x_2-x_6\right) \left(x_3-x_6\right) \left(x_4-x_6\right) \left(x_5-x_6\right)}.$$}
See the Appendix for Mathematica code.
\begin{figure}[h!]
	\centering
	\includegraphics[width=.30\linewidth]{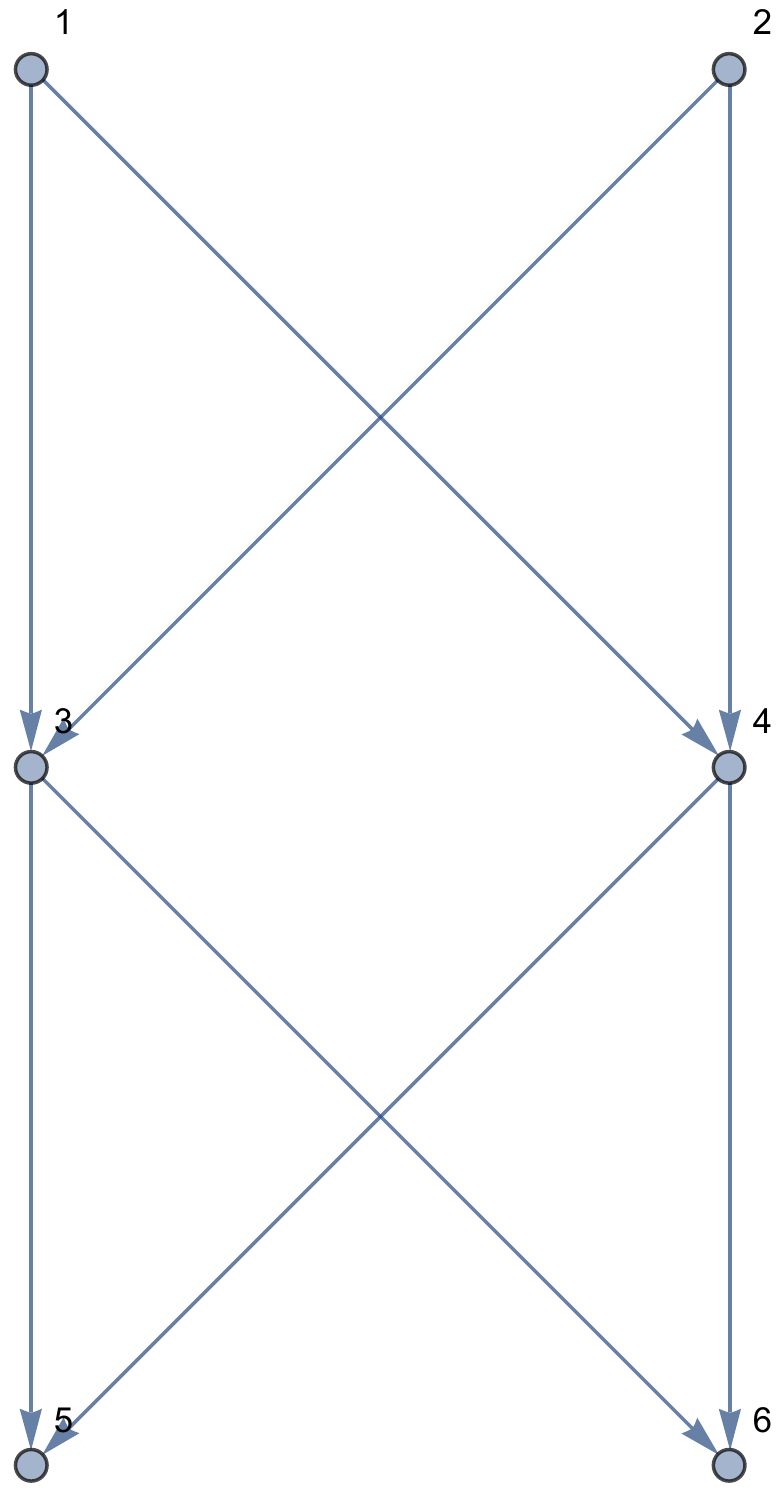}
	\caption{Edge graph for the 6-particle weight permutohedron}
	\label{fig:6-particle-weight-permutohedron}
\end{figure}

In \cite{EarlyNonplanar} we shall investigate how the ordered pair condition suggests that the above provides a functional representation of a body built from permutohedral cones, where additional degenerate terms are determined by inclusion/exclusion for certain generalized permutohedral cones subject to shuffle-lump type relations, studied as \textit{plates} by A. Ocneanu \cite{OcneanuPlates}.

\section{Two Classical Varieties from Representation Theory of the Symmetric Group}
In this section we study the polynomial numerator for the 6-particular non-planar amplitude in detail, using representation theory and algebraic geometry.

Let us first fix terminology.  We shall write $V_{(\lambda_1,\ldots, \lambda_k)}$ for the irreducible representation of the symmetric group $S_n$ which is labeled by the partition $\lambda=(\lambda_1,\ldots, \lambda_k)$ of $n$.  Define $v(t)=(1,t,t^2)\in\mathbb{C}^3$.  Consider the compound determinant 
$$X_{(ab,cd,ef)}:=\det(v(x_a)\times v(x_b),v(x_c)\times v(x_d),v(x_e)\times v(x_f)),$$
where $v(x_i)\times v(x_j)$ is the cross product
$$v(x_i)\times v(x_j)=\det\begin{bmatrix}
\mathbf{i} & \mathbf{j} & \mathbf{k} \\ 
1 & x_i & x_i^2 \\ 
1 & x_j & x_j^2
\end{bmatrix} =((x_j-x_i)\cdot x_i x_j,-(x_j-x_i)\cdot(x_i+x_j),(x_j-x_i)).$$

\begin{lem}\label{XDecomposition}
	The polynomial $X_{(12,34,56)}$ is invariant up to sign under the order $((2^3)(3!)=48)$ wreath product Wr$(S_2,S_3)$, viewed as a subgroup of $S_6$.  The $S_6$-orbit of $X_{(12,34,56)}$ has 15 elements, up to sign.
\end{lem}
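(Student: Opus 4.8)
The plan is to deduce both statements from the column structure of the defining determinant, using the formula for the cross product $v(x_i)\times v(x_j)$ recorded just above. Since relabeling the variables of $X_{(12,34,56)}$ by a permutation $\sigma\in S_6$ produces $X_{(\sigma(1)\sigma(2),\sigma(3)\sigma(4),\sigma(5)\sigma(6))}$, it suffices to understand for which pairings $\{\{a,b\},\{c,d\},\{e,f\}\}$ the polynomial $X_{(ab,cd,ef)}$ agrees, up to sign, with $X_{(12,34,56)}$.

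First I would verify the invariance directly. Swapping the two labels inside one block, say $1\leftrightarrow 2$, replaces the first column $v(x_1)\times v(x_2)$ of the $3\times3$ matrix by $v(x_2)\times v(x_1)=-\,v(x_1)\times v(x_2)$, hence negates a column; permuting the three blocks simply permutes the three columns. In both cases the determinant is multiplied by $\pm1$. The transpositions $(12),(34),(56)$ together with the permutations of the three blocks generate the subgroup $(S_2\times S_2\times S_2)\ltimes S_3\le S_6$ of order $2^3\cdot 3!=48$, which is exactly the stabilizer in $S_6$ of the unordered perfect matching $M_0=\{\{1,2\},\{3,4\},\{5,6\}\}$. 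This gives the first assertion. Since $S_6$ acts transitively on the $6!/(2^3\,3!)=15$ perfect matchings of $\{1,\dots,6\}$, it also shows that the $S_6$-orbit of $X_{(12,34,56)}$ modulo sign has \emph{at most} $15$ elements, and that for a matching $M=\{\{a,b\},\{c,d\},\{e,f\}\}$ the polynomial $X_M:=X_{(ab,cd,ef)}$ is well defined up to sign.

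It then remains to show the orbit has \emph{at least} $15$ elements, that is, that $M\mapsto X_M$ (up to sign) is injective on the $15$ matchings. Here I would use the factorization obtained by pulling the scalar $x_j-x_i$ out of each column via $v(x_i)\times v(x_j)=(x_j-x_i)\bigl(x_ix_j,\,-(x_i+x_j),\,1\bigr)$, namely
$$X_M=\pm\Bigl(\textstyle\prod_{\{i,j\}\in M}(x_i-x_j)\Bigr)\cdot D_M,\qquad D_M:=\det\!\begin{bmatrix}x_ax_b & x_cx_d & x_ex_f\\ x_a+x_b & x_c+x_d & x_e+x_f\\ 1 & 1 & 1\end{bmatrix}.$$
The key point is that $D_M$ is divisible by no linear form $x_i-x_j$ with $i\neq j$; for this it is enough to exhibit one point of the hyperplane $\{x_i=x_j\}$ at which $D_M\neq0$, and setting $x_i=x_j=0$ always reduces $D_M$ to a nonzero polynomial in the remaining variables (there are two cases, according to whether $i,j$ lie in the same block of $M$ or in different blocks, and together they cover all $\binom{6}{2}$ pairs). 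Since $\mathbb{C}[x_1,\dots,x_6]$ is a unique factorization domain, the $(x_i-x_j)$-adic valuation of $X_M$ then equals $1$ if $\{i,j\}\in M$ and $0$ otherwise, so $X_M$ determines $M$; hence $X_M=\pm X_{M'}$ forces $M=M'$. Combined with the upper bound above, the orbit has exactly $15$ elements and the signed stabilizer of $X_{(12,34,56)}$ is precisely $\mathrm{Wr}(S_2,S_3)$.

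The only substantive step is this last injectivity argument --- the invariance and the upper bound of $15$ are one-line column manipulations --- and within it the crux is checking that the reduced determinant $D_M$ contributes no spurious linear factor $x_i-x_j$. One could instead bypass this factorization analysis and verify non-proportionality of the $15$ polynomials $X_M$ by direct expansion, a finite computation carried out by the Mathematica routines in the Appendix; the argument above has the merit of explaining \emph{why} the orbit size must equal the number of perfect matchings of $\{1,\dots,6\}$.
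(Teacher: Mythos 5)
Your proof is correct and follows the same skeleton as the paper's: pull the scalar $(x_j-x_i)$ out of each column to factor $X_{(12,34,56)}$ into $(x_2-x_1)(x_4-x_3)(x_6-x_5)$ times the reduced determinant $\mathcal{C}_{(12,34,56)}$ (your $D_M$), observe invariance up to sign under the stabilizer of the matching $\{\{1,2\},\{3,4\},\{5,6\}\}$, and apply orbit--stabilizer to get $6!/48=15$. The one place you go beyond the paper is the lower bound on the orbit size: the paper simply asserts that ``the stabilizer of $X_{(12,34,56)}$ is the copy of $\mathrm{Wr}(S_2,S_3)$,'' i.e.\ that the signed stabilizer is no larger than the subgroup visibly acting by signs, whereas you actually prove this via the UFD/valuation argument showing that the set of linear factors $x_i-x_j$ of $X_M$ recovers the matching $M$. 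Your two specializations $x_i=x_j=0$ (same block versus different blocks) do cover all $\binom{6}{2}$ pairs and reduce $D_M$ to the visibly nonzero polynomials $x_cx_d(x_e+x_f)-x_ex_f(x_c+x_d)$ and $x_ex_f(x_b-x_d)$ respectively, so the injectivity of $M\mapsto\pm X_M$ is established. In short, your write-up is, if anything, more complete than the paper's at exactly the step that needs justification.
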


\begin{proof}
	The wreath product Wr$(S_2,S_3)$ can be presented explicitly in terms of generators in $S_6$ as
	$$\langle(12),(34),(56),(13)(24),(35)(46)\rangle.$$
	By linearity of the determinant, we have
	$$
	X_{(12,34,56)} = g_{(12,34,56)}\cdot \mathcal{C}_{(12,34,56)}.$$
	where
	$$g_{(12,34,56)}=(x_2-x_1)(x_4-x_3)(x_6-x_5)$$
	and
	$$\mathcal{C}_{(12,34,56)}=\det\begin{bmatrix}
	1 & 1 & 1\\
	x_1+x_2 & x_3+x_4 & x_5+x_6\\
	x_1 x_2 & x_3 x_4 & x_5 x_6\\
	\end{bmatrix}$$
	It follows that the stabilizer of $X_{12,34,56}$ is the copy of Wr$(S_2,S_3)$ presented above, so by the orbit-stabilizer theorem the $S_6$-orbit $\mathcal{X}$ of $X_{(12,34,56)}$ has $6!/48=15$ elements, up to sign.
\end{proof}
However, the complex vector space spanned by the $X_I$ has only dimension 5.
\begin{defn}
	Let 
	$$\mathcal{X}_G=\{X_{(12,34,56)},X_{(16,23,45)},X_{(14,26,35)}, X_{(15,24,36)}, X_{(13,25,46)}\}\subset \mathcal{X}.$$
\end{defn}
In what follows, we prove that $\mathcal{X}_G$ is a basis which behaves optimally with respect to permutation of the coordinate labels.  We therefore call the  $\mathcal{X}_G$ \textbf{good} basis due to the simple form of the relations given below.
\begin{prop}
	The set $\mathcal{X}_G$ spans the orbit space $\mathcal{X}$.  
\end{prop}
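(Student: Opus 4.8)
The plan is to combine the dimension count already recorded above --- that the complex span of the full orbit $\mathcal{X}$ has dimension exactly $5$ --- with a verification that the five polynomials
$X_{(12,34,56)},\,X_{(16,23,45)},\,X_{(14,26,35)},\,X_{(15,24,36)},\,X_{(13,25,46)}$
comprising $\mathcal{X}_G$ are linearly independent over $\mathbb{C}$. Granting both facts, $\mathcal{X}_G$ is a set of $5$ linearly independent vectors inside a $5$-dimensional space, hence a basis, and in particular it spans $\mathcal{X}$.

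To set up the independence check I would first fix, once and for all, a sign representative for each $X_I$: the polynomials $X_I$ are only defined up to sign, so a priori $S_6$ acts on them only projectively, and choosing representatives is pure bookkeeping but must precede any linear-algebra statement. I would then expand each of the five chosen polynomials in an explicit basis for the space of degree-$6$ polynomials in $x_1,\ldots,x_6$. Two natural choices present themselves: the monomial basis, or --- more in keeping with the cross-product formula for $v(x_i)\times v(x_j)$ --- the basis given by the $\binom{6}{3}/2 = 10$ products $\langle S\rangle\,\langle S^{c}\rangle$ of complementary $3\times 3$ Vandermonde minors. Expanding the triple product $\det(a\times b,\,c\times d,\,e\times f)$ bilinearly shows that each $X_I$ is a signed sum of exactly two such products, so in the latter basis the five elements of $\mathcal{X}_G$ are simply the rows of an explicit $5\times 10$ integer matrix.

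It then remains to confirm that this matrix has rank $5$, i.e.\ that one of its $5\times 5$ minors is nonzero; equivalently, one may bypass the symbolic expansion altogether and exhibit five numerical points $P_1,\ldots,P_5\in\mathbb{C}^6$ at which the evaluation matrix $\big(X_{I_j}(P_i)\big)_{i,j}$ has nonvanishing determinant, since a single such numerical certificate already forces linear independence of the $X_{I_j}$ as polynomials. Either computation is short and is reproduced in the Appendix. I do not expect a genuine obstacle here: the only real content is the choice of $\mathcal{X}_G$ itself --- namely the observation that it is exactly the union of the size-$2$ and size-$3$ orbits of the Coxeter element $(123456)$ acting on $\mathcal{X}$ --- and the only points requiring care are the sign-representative bookkeeping above and, should one not wish to take the ambient dimension as given, the preliminary task of producing enough linear relations among the fifteen $X_I$ (these descend from the Plücker relations among the $3\times 3$ minors of the $3\times 6$ Veronese matrix) to pin the span down to dimension $5$.
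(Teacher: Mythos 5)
Your argument is logically sound, but it takes a genuinely different route from the paper. The paper's proof is entirely direct: it lists ten explicit linear relations, one for each element of $\mathcal{X}\setminus\mathcal{X}_G$, expressing that element as a $\pm1$-combination of at most two members of $\mathcal{X}_G$; spanning follows immediately, with no appeal to the ambient dimension or to linear independence. You instead combine an upper bound $\dim\operatorname{span}(\mathcal{X})\le 5$ with linear independence of the five chosen polynomials. Two remarks on that. First, the sentence ``the complex vector space spanned by the $X_I$ has only dimension $5$'' appears in the paper \emph{before} this proposition as an unproved assertion, so you are right not to lean on it as ``exactly $5$''; what your argument actually needs is only the inequality $\le 5$, which you correctly source from the expansion of each $X_I$ as a difference of two products $\Delta_S\Delta_{S^c}$ of complementary minors together with the straightening (Pl\"ucker) relations --- this is precisely the fact the paper invokes in the \emph{next} proposition to identify the span with $V_{(2,2,2)}$. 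Second, the linear independence of $\mathcal{X}_G$ is itself the content of Proposition \ref{Goodbasis} in the paper, proved there by expressing the standard basis $\Delta_{123}\Delta_{456},\ldots,\Delta_{135}\Delta_{246}$ back in terms of $\mathcal{X}_G$; so your proof effectively establishes the basis property first and deduces spanning as a corollary, inverting the paper's order of exposition. Both routes reduce to short explicit computations. What the paper's version buys is the explicit change-of-basis data (the ten relations are used again, with sign twists, for the $\mathcal{C}_I$ in Proposition \ref{irrepsSeparate}); what yours buys is economy, since the rank-$5$ certificate does double duty for this proposition and the next. Your sign-representative bookkeeping is a fair point but harmless here, since spans are unchanged by negation.
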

\begin{proof}
	By explicit computation, expanding each $X_I$ as a polynomial in the minors $\Delta_J$, we have the following list of linear relations.
	\begin{eqnarray*}
		X_{(12,35,46)} & = & -X_{(15,24,36)}+X_{(16,23,45)} \\
		X_{(12,36,45)} & = & X_{(13,25,46)}-X_{(14,26,35)} \\
		X_{(13,24,56)} & = & X_{(14,26,35)}+X_{(16,23,45)} \\
		X_{(13,26,45)} & = & -X_{(12,34,56)}-X_{(15,24,36)} \\
		X_{(14,23,56)} & = & -X_{(13,25,46)}-X_{(15,24,36)} \\
		X_{(14,25,36)} & = & -X_{(12,34,56)}-X_{(16,23,45)} \\
		X_{(15,23,46)} & = & X_{(12,34,56)}-X_{(14,26,35)} \\
		X_{(15,26,34)} & = & X_{(13,25,46)}+X_{(16,23,45)} \\
		X_{(16,24,35)} & = & X_{(12,34,56)}-X_{(13,25,46)} \\
		X_{(16,25,34)} & = & -X_{(14,26,35)}-X_{(15,24,36)} \\
	\end{eqnarray*}
	
\end{proof}

\begin{prop}\label{Goodbasis}
	The set $\mathcal{X}_G$ is linearly independent, and its span is the irreducible $S_6$-module $V_{(2,2,2)}$.
\end{prop}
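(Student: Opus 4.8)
The plan is to first determine $\dim_{\mathbb{C}}\mathcal{X}$ so that the five-element spanning set $\mathcal{X}_G$ is forced to be a basis, and then to identify the $S_6$-module $\mathcal{X}$ by inducing a one-dimensional character off the stabilizer of $X_{(12,34,56)}$. For the \emph{linear independence}: the preceding Proposition shows $\mathcal{X}_G$ spans $\mathcal{X}$, hence $\dim\mathcal{X}\le 5$, and it remains to prove $\dim\mathcal{X}\ge 5$. The most direct way is to exhibit five points $p^{(1)},\dots,p^{(5)}\in\mathbb{C}^{6}$ for which the $5\times 5$ matrix $\big(X_{I_j}(p^{(k)})\big)_{1\le j,k\le 5}$, with $I_1,\dots,I_5$ the matchings indexing $\mathcal{X}_G$, is invertible; equivalently, using the factorization $X_I=g_I\,\mathcal{C}_I$ of Lemma~\ref{XDecomposition}, expand the five polynomials in a monomial basis (or in the basis of $3\times3$ minors $\Delta_J$ of the Veronese matrix) and check that their coefficient vectors are independent. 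This is the one step that is a genuine computation, and it is exactly what the Mathematica code in the Appendix verifies; together with $\dim\mathcal{X}\le 5$ it gives $\dim\mathcal{X}=5$, so $\mathcal{X}_G$ is a basis.

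For the \emph{module structure}: by multilinearity of the determinant, as in the proof of Lemma~\ref{XDecomposition}, $S_6$ permutes the fifteen polynomials $X_I$ with signs, $\sigma\cdot X_{(ab,cd,ef)}=\pm\,X_{\sigma(I)}$, so $\mathcal{X}$ is a $5$-dimensional $S_6$-module, cyclic and generated by $X_{(12,34,56)}$. By Frobenius reciprocity there is a surjection $\mathrm{Ind}_{\mathrm{Wr}(S_2,S_3)}^{S_6}\chi_0\twoheadrightarrow\mathcal{X}$, where $\chi_0$ is the one-dimensional character of the stabilizer $\mathrm{Wr}(S_2,S_3)$ through which $X_{(12,34,56)}$ transforms. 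Writing $X_{(12,34,56)}=g_{(12,34,56)}\,\mathcal{C}_{(12,34,56)}$, one reads off that $\chi_0$ sends each of the five generators $(12),(34),(56),(13)(24),(35)(46)$ to $-1$: the Vandermonde factor $g$ contributes the sign on the transpositions $(12),(34),(56)$, and the induced column transposition in $\mathcal{C}$ contributes the sign on $(13)(24),(35)(46)$; thus $\chi_0=\mathrm{sgn}_{S_2}\wr\mathrm{sgn}_{S_3}$. The Frobenius characteristic of the induced module is therefore the plethysm $e_3[e_2]$, and a short symmetric-function computation (for instance applying $\omega$ to $e_3[h_2]=s_{(3,3)}+s_{(4,1,1)}$) gives $e_3[e_2]=s_{(2,2,2)}+s_{(3,1,1,1)}$, so $\mathrm{Ind}_{\mathrm{Wr}(S_2,S_3)}^{S_6}\chi_0\cong V_{(2,2,2)}\oplus V_{(3,1,1,1)}$ with dimensions $5+10$. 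Since $\mathcal{X}$ is a nonzero $5$-dimensional quotient of $V_{(2,2,2)}\oplus V_{(3,1,1,1)}$, and the only such quotient of dimension $5$ is $V_{(2,2,2)}$, we conclude $\mathcal{X}\cong V_{(2,2,2)}$; in particular $\mathcal{X}$ is irreducible. As an independent check one can avoid the plethysm and compute $\chi_{\mathcal{X}}$ directly in the basis $\mathcal{X}_G$: the ten relations of the preceding Proposition together with the sign rules make the matrices of a transposition and of a $3$-cycle explicit, and one finds $\chi_{\mathcal{X}}((12))=-1$ and $\chi_{\mathcal{X}}((123))=-1$; among the $5$-dimensional representations of $S_6$ only $V_{(2,2,2)}$ has this pair of character values (this also excludes the possibility that $\mathcal{X}$ is a sum of one-dimensional modules).

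The main obstacle is the first part. Everything after the dimension count is essentially formal --- the plethysm $e_3[e_2]$, the quotient argument, the character-table comparison --- but showing that the ten displayed relations span \emph{all} linear relations among the $X_I$, equivalently that $\dim\mathcal{X}$ is exactly $5$ and not smaller, cannot be done without an explicit rank computation. The only other delicate point is careful bookkeeping of the signs in the $S_6$-action, which enter both through the Vandermonde factor $g_I$ and through the reordering of the columns of $\mathcal{C}_I$.
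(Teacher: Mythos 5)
Your argument is correct, but it reaches the conclusion by a genuinely different route than the paper. The paper's proof does both halves of the statement in one stroke: it expands $X_{(12,34,56)}=\det\bigl[\begin{smallmatrix}\Delta_{134}&\Delta_{156}\\ \Delta_{234}&\Delta_{256}\end{smallmatrix}\bigr]$, so that each $X_I$ visibly lies in the span of the products $\Delta_J\Delta_{J^c}$, and then uses the straightening relation to \emph{invert} this, displaying each element of the standard basis $\{\Delta_{123}\Delta_{456},\dots,\Delta_{135}\Delta_{246}\}$ of $V_{(2,2,2)}$ as an explicit half-integer combination of the five elements of $\mathcal{X}_G$. Since five linearly independent polynomials then lie in the span of five polynomials, independence and the identification of the span with the concrete copy of $V_{(2,2,2)}$ follow simultaneously, with the change of basis as a bonus. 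You instead split the two halves: a rank computation for independence, and an induced-representation argument for the module type. Your representation-theoretic half is sound and arguably more conceptual than the paper's trace computations: the character $\chi_0$ of $\mathrm{Wr}(S_2,S_3)$ is indeed $\mathrm{sgn}\wr\mathrm{sgn}$ (the signs from $g_I$ and from the column swaps in $\mathcal{C}_I$ are exactly as you say), the plethysm $e_3[e_2]=s_{(2,2,2)}+s_{(3,1,1,1)}$ is correct (it is $\omega$ applied to $e_3[h_2]=s_{(3,3)}+s_{(4,1,1)}$), and the quotient argument then forces $\mathcal{X}\cong V_{(2,2,2)}$ once $\dim\mathcal{X}=5$ is known. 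Two caveats: your independence step is only sketched (``evaluate at five points'' or ``compute the rank of the coefficient matrix'') and is \emph{not} what the Appendix code verifies --- that code concerns the Parke--Taylor sums --- so it would have to be carried out separately; and your argument identifies $\mathcal{X}$ only up to isomorphism, whereas the paper's pins it down as the concrete subspace spanned by the $\Delta_J\Delta_{J^c}$, which is what is used later (e.g.\ in Proposition~\ref{InvariantPolynomialX}). Both proofs ultimately lean on an explicit computation of comparable size, so neither is strictly more elementary; yours trades the straightening identities for a plethysm.
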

\begin{proof}
	Let $\Delta_{i,j,k}=\det(v(x_i),v(x_j),v(x_k))$.  
	
	It follows from standard theory representation theory of the symmetric group, see for example \cite{FultonBook}, that the set of polynomials
	$$\{\Delta _{123}\Delta_{456},\Delta _{124}\Delta_{356},\Delta _{125}\Delta_{346},\Delta _{134}\Delta_{256},\Delta _{135}\Delta_{246}\}$$
	is a basis for $V_{(2,2,2)}$.  Expanding by minors, for example
	
	$$X_{(12,34,56)} = \det\begin{bmatrix}
	\Delta_{134} & \Delta_{156}\\
	\Delta_{234} & \Delta_{256}
	\end{bmatrix},$$
	and applying the so-called straightening relation \cite{FultonBook}
	\begin{eqnarray*}
		\Delta_{abc}\Delta_{ijk} & = & \left(\Delta _{ajc}\Delta _{ibk}-\Delta _{aic}\Delta _{jbk}-\Delta _{aij}\Delta _{bck}-\Delta _{ajb}\Delta _{ick}+\Delta _{aib}\Delta _{jck}\right)
	\end{eqnarray*}
	expresses the usual basis of $V_{(2,2,2)}$ in terms of our good basis $\mathcal{X}_G$.  By explicit computation we have
	
	\begin{eqnarray*}
		\Delta _{123}\Delta_{456} & = & -\frac{1}{2}\left(X_{12,34,56}+X_{13,25,46}-X_{14,26,35}+X_{15,24,36}-X_{16,23,45}\right) \\
		\Delta _{124}\Delta_{356} & = & \frac{1}{2}\left(X_{12,34,56}-X_{13,25,46}+X_{14,26,35}-X_{15,24,36}+X_{16,23,45}\right)\\
		\Delta _{125}\Delta_{346} & = & \frac{1}{2}\left(X_{12,34,56}+X_{13,25,46}-X_{14,26,35}-X_{15,24,36}+X_{16,23,45}\right)\\
		\Delta _{134}\Delta_{256} & = & \frac{1}{2}\left(X_{12,34,56}+X_{13,25,46}+X_{14,26,35}+X_{15,24,36}+X_{16,23,45}\right)\\
		\Delta _{135}\Delta_{246} & = & -\frac{1}{2}\left(X_{12,34,56}-X_{13,25,46}-X_{14,26,35}-X_{15,24,36}-X_{16,23,45}\right).
	\end{eqnarray*}
	
\end{proof}

For an overview of the representation theory of the symmetric group used in what follows, see for example \cite{FultonBook}.

Recalling that
$$X_{(12,34,56)}=g_{(12,34,56)}\mathcal{C}_{(12,34,56)}=\left(\prod_{i=1,3,5}(x_{i+1}-x_i)\right)\det\begin{bmatrix}
1 & 1 & 1\\
x_1+x_2 & x_3+x_4 & x_5+x_6\\
x_1 x_2 & x_3 x_4 & x_5 x_6\\
\end{bmatrix},$$
in the notation from Lemma \ref{XDecomposition}, we have the following Proposition.
\begin{prop}\label{irrepsSeparate}
	The linear spans of the $S_6$ orbits of $\mathcal{C}_{12,34,56}$ and of $g_{12,34,56}$ are both isomorphic to the $S_6$ module $V_{(3,3)}$. 
\end{prop}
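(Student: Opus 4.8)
The plan is to identify $\mathcal{C}_{(12,34,56)}$ and $g_{(12,34,56)}$ as nonzero vectors inside the unique copy of $V_{(3,3)}$ sitting in the degree-$3$ part of the polynomial ring, after which irreducibility finishes the proof.

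First I would note that, directly from the determinant defining $\mathcal{C}_{(12,34,56)}$ and the product defining $g_{(12,34,56)}$, both polynomials involve only squarefree monomials $x_ix_jx_k$. Hence they both lie in the subspace $U\subset\mathbb{C}[x_1,\dots,x_6]_3$ spanned by squarefree degree-$3$ monomials. With $S_6$ permuting the variables, $U$ is the permutation module on the $3$-element subsets of $\{1,\dots,6\}$, and Young's rule gives
\[
U\;\cong\;V_{(6)}\oplus V_{(5,1)}\oplus V_{(4,2)}\oplus V_{(3,3)},
\]
which is multiplicity free; in particular $V_{(3,3)}$ occurs in $U$ exactly once.

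Next I would pin down that copy of $V_{(3,3)}$ as the kernel of an $S_6$-equivariant ``shadow'' map. For a $3$-subset $T$ write $x_T=\prod_{i\in T}x_i$, and let $M^{(4,2)}$ be the permutation module on $2$-subsets. Define the $S_6$-map $\partial\colon U\to M^{(4,2)}$ by $\partial(x_{\{i,j,k\}})=y_{ij}+y_{ik}+y_{jk}$, so that $\partial\!\left(\sum_{|T|=3}a_Tx_T\right)=\sum_{\{i,j\}}\left(\sum_{k\notin\{i,j\}}a_{\{i,j,k\}}\right)y_{ij}$. This map is surjective --- a short direct computation shows it annihilates none of $V_{(6)}$, $V_{(5,1)}$, $V_{(4,2)}$, or one may invoke the full-rank property of $2$-versus-$3$ set inclusion matrices on a $6$-element set --- so by multiplicity-freeness and Schur's lemma $\ker\partial=V_{(3,3)}$. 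Concretely: a squarefree $f=\sum_{|T|=3}a_Tx_T$ lies in (the unique copy of) $V_{(3,3)}$ if and only if $\sum_{k\notin\{i,j\}}a_{\{i,j,k\}}=0$ for every pair $\{i,j\}$.

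It then remains to verify this vanishing for $\mathcal{C}_{(12,34,56)}$ and $g_{(12,34,56)}$, which is a short explicit check. Since each of these polynomials is $\text{Wr}(S_2,S_3)$-invariant up to sign (as is read off from the proof of Lemma \ref{XDecomposition}), it suffices to test the condition on one representative pair from each of the two $\text{Wr}(S_2,S_3)$-orbits of pairs: a pair forming one of the three blocks (say $\{1,2\}$) and a pair split across two blocks (say $\{1,3\}$), and in both cases the contributing coefficients cancel. This gives $\mathcal{C}_{(12,34,56)},\,g_{(12,34,56)}\in V_{(3,3)}$, and since $V_{(3,3)}$ is irreducible and the two polynomials are nonzero, the $S_6$-orbit of each spans $V_{(3,3)}$, as claimed. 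The only delicate point is the identification $\ker\partial=V_{(3,3)}$; an alternative that sidesteps it is to observe that the orbit span of $\mathcal{C}_{(12,34,56)}$ (resp.\ $g_{(12,34,56)}$) is a quotient of the $S_6$-module induced from the appropriate linear character of $\text{Wr}(S_2,S_3)$, which works out to $V_{(4,1,1)}\oplus V_{(3,3)}$ (resp.\ $V_{(1,1,1,1,1,1)}\oplus V_{(2,2,1,1)}\oplus V_{(3,3)}$); intersecting with $U$, whose only constituents are $V_{(6)},V_{(5,1)},V_{(4,2)},V_{(3,3)}$, leaves $V_{(3,3)}$ in each case.
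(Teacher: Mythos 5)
Your proof is correct, but it takes a genuinely different route from the paper's. The paper treats the two polynomials separately and only in sketch form: for $g_{(12,34,56)}$ it appeals to the classical Specht-polynomial construction (a product of column Vandermondes for a tableau of shape $(3,3)$ generates $V_{(3,3)}$), while for $\mathcal{C}_{(12,34,56)}$ it first records the ten linear relations among the fifteen $\mathcal{C}_I$, writes the $5\times 5$ matrices of conjugacy-class representatives in the basis $\mathcal{C}_{(12,34,56)},\ldots,\mathcal{C}_{(13,25,46)}$, and compares traces with the character table of $S_6$. You instead give a uniform, essentially computation-free identification: both polynomials are supported on squarefree cubic monomials, hence lie in the multiplicity-free permutation module $M^{(3,3)}\cong V_{(6)}\oplus V_{(5,1)}\oplus V_{(4,2)}\oplus V_{(3,3)}$, whose unique $V_{(3,3)}$-constituent is the kernel of the equivariant pair-shadow map (surjective by the full-rank property of the $2$-versus-$3$ inclusion matrix, or by a dimension count), and membership in that kernel reduces by $\text{Wr}(S_2,S_3)$-equivariance to two one-line coefficient cancellations, one for a within-block pair and one for a cross-block pair; I checked both and they do cancel as you claim, and irreducibility then forces each orbit to span all of $V_{(3,3)}$. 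Your approach buys a proof that needs neither the character table nor the prior derivation of the linear relations among the $\mathcal{C}_I$ (which the paper's sketch implicitly presupposes), and it handles $g$ and $\mathcal{C}$ on the same footing; the paper's approach buys the explicit matrices and relations, which it uses elsewhere. Your closing induction-based fallback is also sound --- for $g$ the relevant character is $\mathrm{sgn}$ restricted to $\text{Wr}(S_2,S_3)$, so the induced module is $\mathrm{sgn}\otimes\bigl(V_{(6)}\oplus V_{(4,2)}\oplus V_{(2,2,2)}\bigr)=V_{(1^6)}\oplus V_{(2,2,1,1)}\oplus V_{(3,3)}$ as you state --- though it trades the kernel identification for a small induced-character computation in the $\mathcal{C}$ case.
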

\begin{proof}(Sketch)
	It follows directly from the classical construction, see \cite{FultonBook}, that the polynomials $g_I$ span the irreducible $S_6$ representation $V_{(3,3)}$.
	
	We sketch the direct computation.  The character values of conjugacy classes of permutations in $S_6$, acting on the linear span of the permutations of $\mathcal{C}_{12,34,56}$, can be obtained directly from the matrices, with respect to the ordered basis
	$$\mathcal{C}_{(12,34,56)},\mathcal{C}_{(16,23,45)},\mathcal{C}_{(14,26,35)}, \mathcal{C}_{(15,24,36)}, \mathcal{C}_{(13,25,46)},$$
	by taking traces of matrices of conjugacy class representatives and then for instance comparing to the known character table for $S_6$.  The relations for the polynomials $\mathcal{C}_I$, which are the same up to some signs as those for the $X_I$, are
	\begin{eqnarray*}
		\mathcal{C}_{(12,35,46)} & = & \mathcal{C}_{(15,24,36)}-\mathcal{C}_{(16,23,45)}\\
		\mathcal{C}_{(12,36,45)} & = & \mathcal{C}_{(13,25,46)}-\mathcal{C}_{(14,26,35)}\\
		\mathcal{C}_{(13,26,45)} & = & \mathcal{C}_{(12,34,56)}+\mathcal{C}_{(15,24,36)}\\
		\mathcal{C}_{(14,23,56)} & = & -\mathcal{C}_{(13,25,46)}-\mathcal{C}_{(15,24,36)}\\
		\mathcal{C}_{(14,25,36)} & = & \mathcal{C}_{(12,34,56)}+\mathcal{C}_{(16,23,45)}\\
		\mathcal{C}_{(15,23,46)} & = & \mathcal{C}_{(14,26,35)}-\mathcal{C}_{(12,34,56)}\\
		\mathcal{C}_{(15,26,34)} & = & -\mathcal{C}_{(13,25,46)}-\mathcal{C}_{(16,23,45)}\\
		\mathcal{C}_{(16,24,35)} & = & \mathcal{C}_{(13,25,46)}-\mathcal{C}_{(12,34,56)}\\
		\mathcal{C}_{(16,25,34)} & = & -\mathcal{C}_{(14,26,35)}-\mathcal{C}_{(15,24,36)}\\
	\end{eqnarray*}
For example, in the ``$\mathcal{C}$'' basis the permutation $(12)(34)(56)$ has the matrix expression
$$
\begin{bmatrix}
1 & -1 & 1 & -1 & 1 \\
0 & -1 & 0 & 0 & 0 \\
0 & 0 & -1 & 0 & 0 \\
0 & 0 & 0 & -1 & 0 \\
0 & 0 & 0 & 0 & -1 \\
\end{bmatrix}
$$
which has trace -3.  We omit the rest of the computation.
\end{proof}

One can easily check that under cyclic permutation of $x_1,\ldots, x_6$ the good basis decomposes (up to sign) into orbits of sizes 2 and 3:
$$\{X_{12,34,56},X_{16,23,45}\}\cup\{X_{14,26,35}, X_{15,24,36}, X_{13,25,46}\},$$
and similarly for the $\mathcal{C}_I$.

\section{A Quartic Relation for the Usual Determinantal Basis of $V_{(2,2,2)}$}\label{Igusa}
In order to determine how many relations to expect between the polynomials $\mathcal{C}_I$, a Mathematica computation shows that the Jacobian of the map $\mathbb{C}^6\rightarrow \mathbb{C}^5$ given by 
$$(x_1,\ldots, x_6)\mapsto \{X_{12,34,56},X_{16,23,45},X_{14,26,35},X_{15,24,36},X_{13,25,46}\}$$
has rank 4.  We therefore expect one algebraic relation to hold among the elements of the good basis.  Indeed, though the computation was too intensive for our laptop, with L. Oeding's help \cite{Oeding}, Grobner basis techniques in Macaulay2 implemented on a computing cluster yielded the following homogeneous degree 4 polynomial relation, which may be easily verified on a computer.  In what follows, our computations are performed in the some what simpler basis which is labeled by standard Young tableaux.

\begin{prop}\label{InvariantPolynomialX}
	The polynomials
	\begin{eqnarray*}
		d_1 & = & \Delta_{123}\Delta_{456}\\
		d_2 & = & \Delta_{124}\Delta_{356}\\
		d_3 & = & \Delta_{125}\Delta_{346}\\
		d_4 & = & \Delta_{134}\Delta_{256}\\
		d_5 & = & \Delta_{135}\Delta_{246},
	\end{eqnarray*}
	satisfy
	$$\left(d_1(d_1-d_2+d_3+d_4-d_5)+d_2 d_5+d_3 d_4\right)^2-4 d_2 d_3 d_4 d_5 = 0.$$
\end{prop}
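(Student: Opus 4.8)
The plan is to verify the quartic relation by a direct elimination computation, but organized so that the algebra stays tractable by hand or with light computer assistance. The five products $d_1,\dots,d_5$ are polynomials in the six variables $x_1,\dots,x_6$; since the Jacobian of the map $(x_1,\dots,x_6)\mapsto(d_1,\dots,d_5)$ has rank $4$ (this is the Mathematica computation cited just before the Proposition), the image is a hypersurface in $\mathbb{C}^5$, so there is a unique (up to scalar) irreducible polynomial relation, and we only need to exhibit one polynomial that vanishes on the image and has the stated form. I would first record the explicit expansions of each $\Delta_{ijk}=\det(v(x_i),v(x_j),v(x_k))$ as the Vandermonde product $(x_j-x_i)(x_k-x_i)(x_k-x_j)$, so that each $d_\alpha$ is a product of six linear factors. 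This factored form is the key simplification: it lets one eliminate variables one at a time rather than attacking a dense degree-$12$ ideal directly.

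The main steps I would carry out are: (1) substitute the factored forms into the left-hand side $\bigl(d_1(d_1-d_2+d_3+d_4-d_5)+d_2 d_5+d_3 d_4\bigr)^2-4\,d_2 d_3 d_4 d_5$; (2) observe that this is the discriminant-type expression $(A+B+C)^2-4BC$ with $A=d_1(d_1-d_2+d_3+d_4-d_5)$, $B=d_2 d_5$, $C=d_3 d_4$, which factors as $(A+B+C)^2-4BC=(A+B-C)^2+4A B-\dots$ — more usefully, it equals $\bigl(d_1(d_1-d_2+d_3+d_4-d_5)+d_2 d_5-d_3 d_4\bigr)^2+4 d_1 d_3 d_4(d_1-d_2+d_3+d_4-d_5)+\dots$; the cleanest route is simply to treat it as a quadratic in $d_1$ and complete the square, reducing the identity to showing a certain quadratic polynomial in $d_1$ with coefficients in $\mathbb{Z}[d_2,d_3,d_4,d_5]$ vanishes on the parametrized locus. (3) Since all five $d_\alpha$ share common linear factors (for instance $(x_2-x_1)$ divides $d_1$, etc.), divide through by the greatest common monomial in the $x_j-x_i$ and reduce to an identity in a smaller number of independent quantities; by the $S_6$-equivariance from Proposition \ref{Goodbasis} it suffices to verify it after specializing, say, $x_1=0,\ x_2=1,\ x_6=\infty$ (i.e.\ passing to the chart where one takes a limit), which cuts the computation down to three free variables. (4) In that chart the identity becomes a polynomial identity in three variables of manageable degree, which one checks by expansion, or, even more cheaply, by evaluating both sides at enough generic numerical points to exceed the degree bound — since we already know a relation of degree $4$ exists and is unique, matching the candidate polynomial against the true relation at finitely many points forces equality.

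The hard part will be step (3)–(4): keeping the bookkeeping of which Vandermonde factors are shared among the $d_\alpha$ under control, so that after cancellation one is left with a genuinely low-degree identity rather than a deceptively large one. A secondary subtlety is confirming that the polynomial we write down is \emph{the} relation and not a reducible multiple of it — here one invokes the rank-$4$ Jacobian fact to know the true relation is irreducible of some degree $d\le 4$, checks that our quartic is not divisible by any lower-degree $S_6$-invariant (equivalently, that its zero locus is irreducible, which one can see from the fact that it defines the Segre cubic's quadratic cover, as developed in the sections that follow), and concludes. I expect that with the factored $\Delta_{ijk}$ in hand and the affine specialization, the whole verification reduces to a one-page symbolic check, which is why the authors relegate the original brute-force Gröbner computation to \cite{Oeding} and merely remark that the final identity "may be easily verified on a computer."
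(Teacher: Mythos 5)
The paper offers no proof of this Proposition beyond the remark that the identity ``may be easily verified on a computer'' (the relation itself was found by Oeding via a Gr\"obner basis computation), so a direct verification of the polynomial identity is exactly the right kind of argument, and your overall plan is in the same spirit. However, two of your supporting claims are wrong as stated. First, the five products $d_\alpha$ have \emph{no} common linear factor: $(x_2-x_1)$ divides $d_1,d_2,d_3$ but not $d_4=\Delta_{134}\Delta_{256}$, and one checks similarly that no factor $(x_j-x_i)$ occurs in all five, so the gcd in step (3) is $1$ and ``dividing through'' accomplishes nothing. Second, $S_6$-equivariance cannot justify the specialization $x_1=0$, $x_2=1$, $x_6=\infty$: permuting the labels never pins down the \emph{values} of the variables, and a polynomial identity in six variables is not implied by its restriction to a codimension-three slice. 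The reduction to three free variables is in fact salvageable, but by a different symmetry that you have not invoked: each $\Delta_{ijk}$ is the Vandermonde product, so each $d_\alpha$ is translation-invariant, homogeneous of degree $6$, and satisfies $d_\alpha(1/x_1,\ldots,1/x_6)=d_\alpha(x)/(x_1\cdots x_6)^2$ with the \emph{same} prefactor for all $\alpha$; since the left-hand side of the claimed identity is homogeneous of degree $4$ in the $d_\alpha$, it transforms by a nonzero overall cocycle under the resulting $PGL_2$ action on the $x_i$, whose orbit of the slice $\{x_1=0,\,x_2=1,\,x_6=\infty\}$ is Zariski dense. That M\"obius-covariance argument is the missing ingredient; without it step (3)--(4) does not establish the identity.

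A smaller point: the closing appeal to uniqueness of the irreducible relation is both circular and unnecessary. The Proposition only asserts that this particular quartic vanishes identically on $\mathbb{C}^6$ after substituting the $d_\alpha$; checking that is a finite polynomial-identity verification (by full expansion, or by interpolation at enough points once a degree bound in each variable is fixed), and no irreducibility or uniqueness statement about the ideal of the image is needed. Knowing that \emph{some} degree-$4$ relation exists does not let you conclude your candidate is it from finitely many evaluations on the image.
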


\begin{question}
	We expect that the quartic polynomial relation above can be shown to be equivalent to the Igusa quartic.  What change of variable realizes this explicitly?
\end{question}
\section{A Cubic Relation for the Cubic Polynomials $\mathcal{C}_{ij,rs,uv}$}
Recall that 
$$\mathcal{C}_{12,34,56}=\det\begin{bmatrix}
1 & 1 & 1\\
x_1+x_2 & x_3+x_4 & x_5+x_6\\
x_1 x_2 & x_3 x_4 & x_5 x_6\\
\end{bmatrix}.$$

A computer-assisted computation, this time small enough to be run on a laptop, shows that the polynomial relation suggested by the rank computation of the Jacobian of the polynomial map
$$(x_1,\ldots,x_6)\mapsto \{\mathcal{C}_{12,34,56},\mathcal{C}_{16,23,45},\mathcal{C}_{14,26,35},\mathcal{C}_{15,24,36},\mathcal{C}_{13,25,46}\},$$
can be expressed in terms of symmetric functions, after a sign twist.  Namely, the five polynomials $\mathcal{C}_I$ above satisfy the following surprising identity.  
\begin{thm}\label{SymmetricForm}
	Let $$(f_1,f_2,f_3,f_4,f_5)=(\mathcal{C}_{12,34,56},-\mathcal{C}_{16,23,45},\mathcal{C}_{14,26,35},-\mathcal{C}_{15,24,36},\mathcal{C}_{13,25,46}).$$ 
	Then
	\begin{eqnarray*}
		5 \sigma _1^3-18 \sigma _1 \sigma _2+27 \sigma _3=0,
	\end{eqnarray*}
	where $\sigma_i=\sigma_i(f_1,f_2,f_3,f_4,f_5)$ is the degree $i$ elementary symmetric function.
\end{thm}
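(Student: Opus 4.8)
The left-hand side is a homogeneous polynomial of degree $9$ in $x_1,\dots,x_6$, so the identity can be checked by direct expansion; the plan is to organize that verification so that the symmetric shape of the answer is explained, and so that the statement becomes visibly the defining equation of the Segre cubic. First I would pass to power sums. Writing $p_k=f_1^{k}+\cdots+f_5^{k}$, Newton's identities give $5\sigma_1^3-18\sigma_1\sigma_2+27\sigma_3=\tfrac12\bigl(p_1^3-9p_1p_2+18p_3\bigr)$. Next I would recenter and adjoin a sixth coordinate: set $u_0=-\tfrac16 p_1$ and $u_i=f_i-\tfrac16 p_1$ for $i=1,\dots,5$. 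Then $\sum_{i=0}^{5}u_i=0$ identically, and expanding $\sum_{i=1}^{5}\bigl(f_i-\tfrac16 p_1\bigr)^3$ and adding $u_0^3$ gives $\sum_{i=0}^{5}u_i^3=\tfrac1{18}\bigl(p_1^3-9p_1p_2+18p_3\bigr)=\tfrac19\bigl(5\sigma_1^3-18\sigma_1\sigma_2+27\sigma_3\bigr)$. Hence the theorem is equivalent to the pair of identities
\[
\sum_{i=0}^{5}u_i=0=\sum_{i=0}^{5}u_i^3,
\]
which is precisely the Segre cubic threefold in its classical Sylvester pentahedral form, making transparent the identification with the Segre cubic of \cite{DolgachevAlgebraicGeometry}. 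Equivalently, since $\sum u_i=0$ one has $\sum u_i^3=3e_3(u_0,\dots,u_5)$, so the content is $e_3(u_0,\dots,u_5)=0$: the six explicit cubic polynomials $u_0,\dots,u_5\in\mathbb{C}[x_1,\dots,x_6]$ are the roots of a sextic in Joubert normal form.

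It then remains to prove $\sum_{i=0}^{5}u_i^3=0$. The direct route, carried out in the Appendix, substitutes the determinantal form $\mathcal{C}_{ij,kl,mn}=(x_i+x_j)(x_kx_l-x_mx_n)+(x_k+x_l)(x_mx_n-x_ix_j)+(x_m+x_n)(x_ix_j-x_kx_l)$ into the five $f_i$, forms $\sigma_1$, forms the $u_i$, cubes and sums, and records the cancellation. For a more structural argument I would exploit the Jacobian rank computation of this section (the rank equals $4$), which already forces the prime ideal of the image of $x\mapsto(\mathcal{C}_I)_{I\in\mathcal{X}_G}$ to be principal: linear independence of the five $\mathcal{C}_I$ indexed by $\mathcal{X}_G$ (Proposition \ref{irrepsSeparate}) excludes a linear generator, a rank check on $\operatorname{Sym}^2 V_{(3,3)}^{*}\to\mathbb{C}[x_1,\dots,x_6]_{6}$ excludes a quadratic one, and a rank check on $\operatorname{Sym}^3 V_{(3,3)}^{*}\to\mathbb{C}[x_1,\dots,x_6]_{9}$ shows the space of cubic relations is one dimensional. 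That line is an $S_6$-submodule of $\operatorname{Sym}^3 V_{(3,3)}^{*}$, hence a copy of the trivial or the sign representation, and feeding in the explicit $S_6$-action on the good basis (Proposition \ref{irrepsSeparate} together with the listed relations among the $\mathcal{C}_I$) then pins the generator down up to a scalar, which is the symmetric cubic above. Alternatively one could compute the singular locus of the image cubic, exhibit its ten ordinary double points, and invoke the projective uniqueness of the ten-nodal cubic threefold.

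The step I expect to be the main obstacle is that every route ultimately bottoms out in a sizeable symbolic computation: the ``surprise'' that the relation is symmetric is, concretely, the identity $\sum_{i=0}^{5}u_i^3=0$ for these six specific degree-$3$ polynomials, and the representation-theoretic argument above explains why such a relation must be unique and $S_6$-stable but does not by itself produce the coefficients $(5,-18,27)$. A fully hand-checkable proof would require identifying the $u_i$ with classical $SL_2$-covariants of the six points $x_1,\dots,x_6\in\mathbb{P}^1$ and quoting Joubert's theorem on the Joubert normal form of a binary sextic, for which the relations $\sum u_i=\sum u_i^3=0$ are classical; making that identification explicit is the delicate point.
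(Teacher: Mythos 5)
Your proposal is correct, and its irreducible core coincides with the paper's proof: Theorem \ref{SymmetricForm} is established there by the one-line argument that ``an explicit computation using any CAS shows that the expression vanishes,'' and you candidly concede that every route you describe ultimately bottoms out in the same symbolic verification. What you add on top of that is genuine and worth comparing. Your power-sum manipulation is correct ($5\sigma_1^3-18\sigma_1\sigma_2+27\sigma_3=\tfrac12(p_1^3-9p_1p_2+18p_3)$, and with $u_0=-\tfrac16p_1$, $u_i=f_i-\tfrac16p_1$ one indeed gets $\sum_{i=0}^5u_i=0$ and $\sum_{i=0}^5u_i^3=\tfrac19(5\sigma_1^3-18\sigma_1\sigma_2+27\sigma_3)$), so you recast the identity in the Sylvester pentahedral form $\sum u_i=\sum u_i^3=0$. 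This buys you the paper's subsequent Corollary for free: the identification with the Segre cubic becomes a matter of inspection rather than an appeal to the uniqueness of the ten-nodal $S_6$-invariant cubic threefold, which is how the paper justifies it. Your representation-theoretic uniqueness argument (Jacobian rank $4$ forces a principal ideal of relations; linear and quadratic generators are excluded; the line of cubic relations is a trivial or sign $S_6$-subrepresentation of $\operatorname{Sym}^3V_{(3,3)}^{*}$) is a sensible structural explanation of \emph{why} a symmetric relation exists, and it is not in the paper; but as you note it pins the relation down only up to scalar and does not by itself produce the coefficients $(5,-18,27)$, so it cannot replace the computation. Two small corrections: there is no such appendix computation in the paper for this theorem (the Appendix concerns the Parke--Taylor sums), and the claim that the $u_i$ realize the Joubert normal form of a binary sextic is plausible and classically motivated but is asserted rather than proved, so it should be flagged as a remark rather than as an alternative proof.
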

\begin{proof}
	As above, an explicit computation using any CAS shows that the expression vanishes.
\end{proof}

We thank I. Dolgachev for communication regarding the following observation, which follows from standard arguments in algebraic geometry that the Segre cubic is the unique irreducible threefold which has $S_6$ symmetry.  However, we did not attempt to find the explicit change of variable.  See \cite{DolgachevAlgebraicGeometry} for background and references.

\begin{cor}
The relation above defines the Segre cubic.
\end{cor}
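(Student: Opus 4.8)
The plan is to verify that the cubic hypersurface $Z \subset \mathbb{P}^4$ cut out by the relation $5\sigma_1^3 - 18\sigma_1\sigma_2 + 27\sigma_3 = 0$ of Theorem \ref{SymmetricForm} is projectively equivalent to the Segre cubic primal. I would break this into three stages: (i) identify the $S_6$-action on the ambient $\mathbb{P}^4$; (ii) check that $Z$ is irreducible and not a cone; (iii) invoke the classical uniqueness statement. For stage (i), recall from Proposition \ref{Goodbasis} that the span of the $\mathcal{C}_I$ (equivalently, of the $X_I$, and of the good basis $\mathcal{X}_G$) is the irreducible $S_6$-module $V_{(2,2,2)}$, and by Proposition \ref{irrepsSeparate} the $\mathcal{C}_I$ themselves carry $V_{(3,3)}$. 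The five coordinates $(f_1,\dots,f_5)$ are obtained from the $\mathcal{C}_I$ by a diagonal sign twist, so $S_6$ acts on $\mathbb{P}^4$ through (a twist of) the five-dimensional irreducible representation; the point is that this action is faithful modulo the center and that the cubic form $5\sigma_1^3-18\sigma_1\sigma_2+27\sigma_3$ is manifestly $S_6$-invariant, being a polynomial in the elementary symmetric functions $\sigma_i(f_1,\dots,f_5)$, while $S_6$ permutes the $f_j$ up to sign — here one must check that the sign-twisted permutation action still only permutes the $f_j$ among themselves, possibly with signs that cancel in the even-degree combinations $\sigma_1^3$, $\sigma_1\sigma_2$, $\sigma_3$; this is exactly the content of the sign-adjusted relation list preceding the theorem.

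For stage (ii), irreducibility and non-degeneracy of $Z$: the cubic is visibly not a product of linear forms over $\mathbb{C}$ (one can see this by restricting to a generic line, or by noting its singular locus has the wrong dimension for a reducible cubic), and it is not a cone because the $S_6$-orbit of any coordinate point spans $\mathbb{P}^4$, so no common vertex can be $S_6$-fixed while $V_{(2,2,2)}$ has no invariant vector. One could alternatively compute that $Z$ has exactly ten nodes and no worse singularities — the hallmark of the Segre cubic — by finding the singular locus of the affine cone directly from the CAS; the ten nodes should correspond naturally to the ten linear relations listed after the definition of $\mathcal{X}_G$ (each relation $X_{(\cdots)} = \pm X_{(\cdots)} \pm X_{(\cdots)}$ picking out a special point or $\mathbb{P}^1$ of the configuration).

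Finally, stage (iii) is the classical fact, due to Segre and recounted in \cite{DolgachevAlgebraicGeometry}, that up to projective equivalence there is a \emph{unique} irreducible cubic threefold admitting a faithful $S_6$-action (equivalently, the unique cubic threefold with ten nodes); since $Z$ is an irreducible, non-conical cubic threefold in $\mathbb{P}^4$ on which $S_6$ acts through $V_{(2,2,2)}$, it must be the Segre cubic. I expect the main obstacle to be the bookkeeping in stage (i): verifying that the sign twist $(f_1,\dots,f_5) = (\mathcal{C}_{12,34,56}, -\mathcal{C}_{16,23,45}, \mathcal{C}_{14,26,35}, -\mathcal{C}_{15,24,36}, \mathcal{C}_{13,25,46})$ genuinely intertwines the $S_6$-action with signed coordinate permutations on $\mathbb{P}^4$ — in other words, pinning down the precise cocycle — so that $S_6$-invariance of the cubic form is not merely formal but actually holds. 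Once the representation is correctly identified, the rest follows from the cited uniqueness theorem without the need to exhibit an explicit change of coordinates, which is why we do not attempt to produce one here.
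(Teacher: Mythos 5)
Your proposal follows essentially the same route as the paper: the paper's entire proof is the appeal (credited to Dolgachev) to the uniqueness of the Segre cubic as the irreducible threefold with $S_6$ symmetry, and your three stages simply make explicit the invariance, irreducibility, and uniqueness inputs that the paper leaves implicit. The only quibble is a small bookkeeping slip: by Proposition \ref{irrepsSeparate} the $\mathcal{C}_I$ span $V_{(3,3)}$ rather than $V_{(2,2,2)}$ (the latter is spanned by the $X_I$), though projectively the two actions on $\mathbb{P}^4$ agree since the representations differ by the sign character.
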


Moreover, in this formulation we see clearly a symmetry under permutations of the 5 functions $f_i$.  We intend to return to this observation in future work.

In personal correspondence, Dolgachev \cite{DolgachevEmail} directed our attention to \cite{DolgachevAlgebraicGeometry}, where the polynomials $\mathcal{C}_I$ are used to study the Segre cubic in great detail.  To the best of our knowledge, however, our formulation in terms of elementary symmetric functions and the cyclically-invariant decomposition of the $\mathcal{C}_I$-basis has not been written down.  Our motivation here is to develop new techniques to study a classical object in a new context.

It would be interesting to determine whether this $S_5$ symmetry could be related to the constructions in \cite{OuterautomorphismS6} and its sequels.  

\newpage

\appendix
\section{Mathematica Code}
The ordered pair formulation for the 6-particle non-planar Parke-Taylor amplitude is computed in Mathematica with the code given in Figure \ref{fig:ordered-pair-nonplanar}.
\begin{figure}[h!]
	\centering
	\includegraphics[width=1\linewidth]{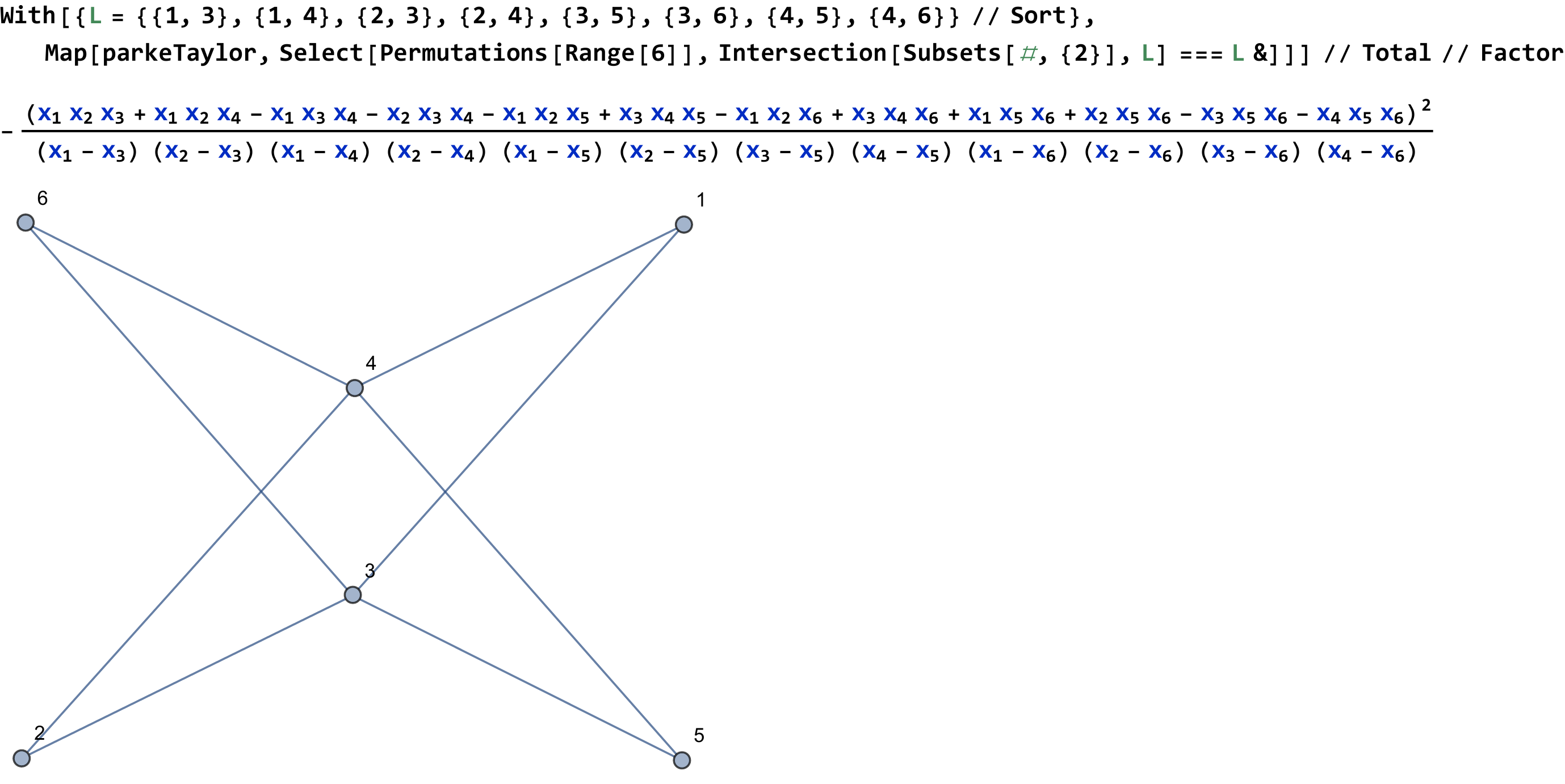}
	\caption{Mathematica computation of the non-planar Parke-Taylor amplitude using oriented edges.}
	\label{fig:ordered-pair-nonplanar}
\end{figure}

Summing functional representatives of plates over the same set of permutations yields a much simpler expression which has essentially the same numerator.  Moreover, the expression separates into a sum of two fractions labeling two permutohedral cones.
\begin{figure}[h!]
	\centering
	\includegraphics[width=1\linewidth]{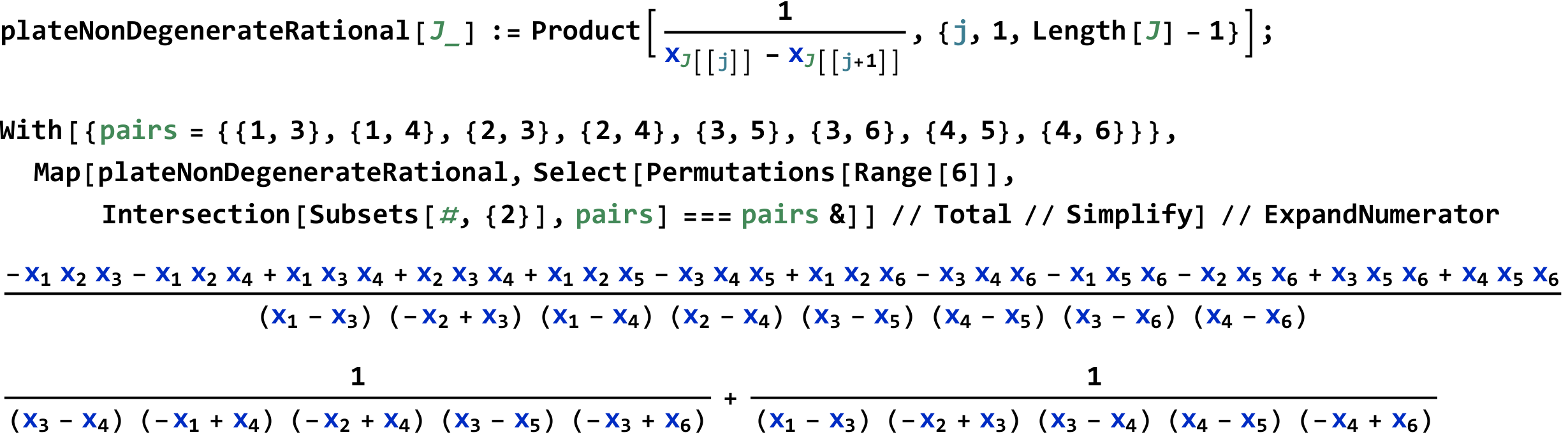}
	\caption{The 6-particle case as a permutohedral cone}
	\label{fig:nonplanaramplitudeasplaterational}
\end{figure}

\begin{figure}[h!]
	\centering
	\includegraphics[width=0.7\linewidth]{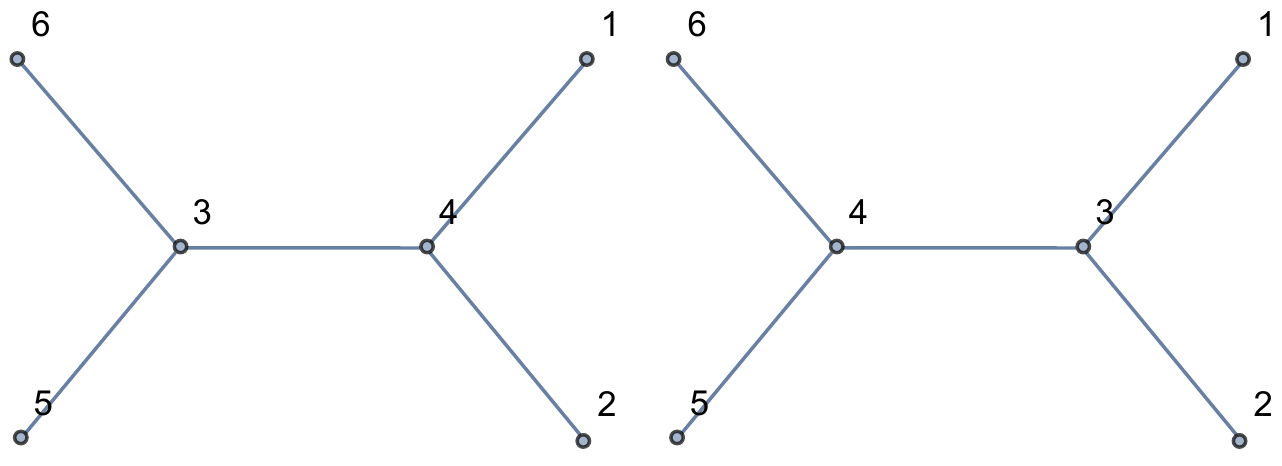}
	\caption{Trees which represent the bottom two fractions in Figure \ref{fig:nonplanaramplitudeasplaterational}.}
	\label{fig:nonplanaramplitudelinearcombinationplates-fixed}
\end{figure}

\begin{figure}[h!]
	\centering
	\includegraphics[width=1\linewidth]{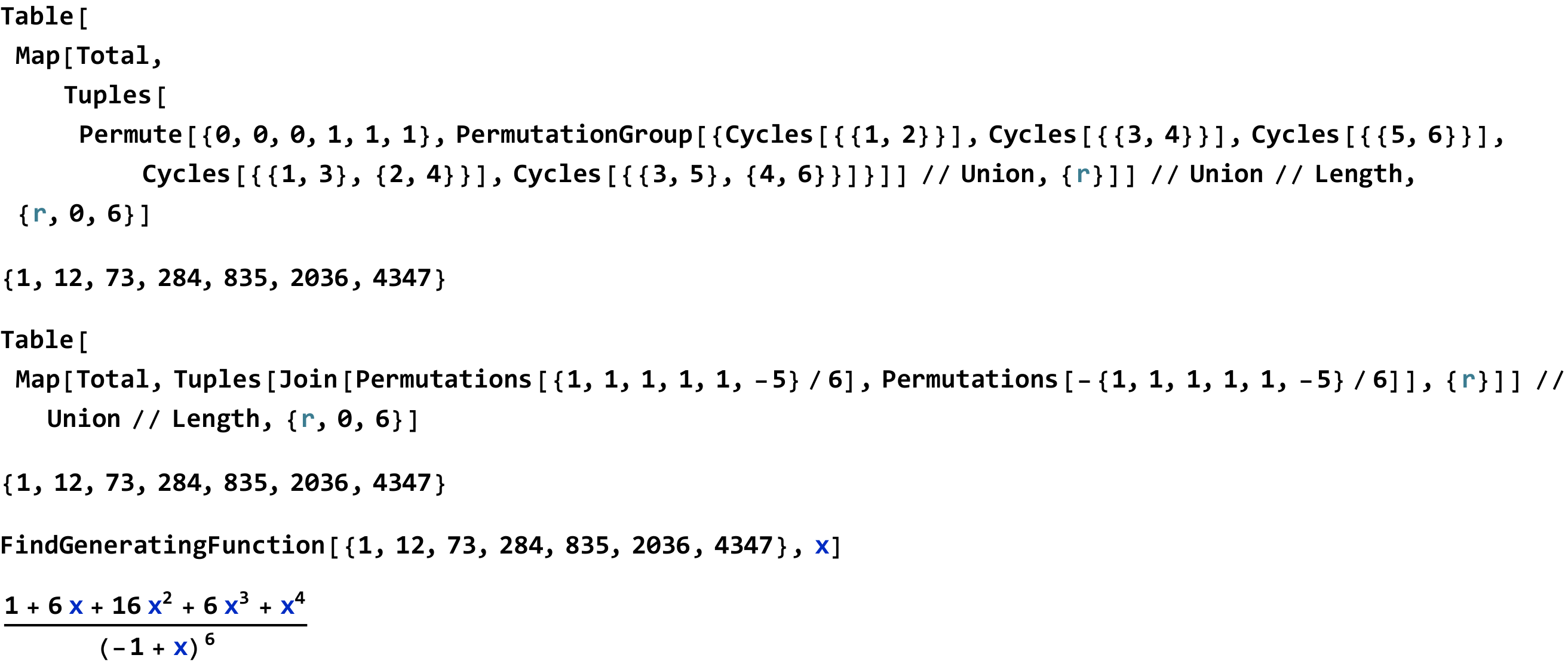}
	\caption{The growth sequences -- and thus the numerators of the generating functions, the coefficients $(1,6,16,6,1)$  -- for the Newton polytope of $\mathcal{C}_{12,34,56}$ and the diplo-simplex, the contact polytope for the dual lattice $A_{5}^*$, coincide.}
	\label{fig:h-vector-c-polynomial}
\end{figure}

\begin{rem}
	The sequence in Figure \ref{fig:h-vector-c-polynomial} also enumerates the numbers of vertices in the dilated diplo-simplices.  The unit diplo simplex has the $2n+2$ vertices $\pm v_i$, where 
	$$v_i=\left(\left(\frac{1}{d+1}\right)^n,\left(\frac{-d}{d+1}\right)\right).$$
	See \cite{ConwaySloaneCoordinatingFunction} for details.

\end{rem}

\begin{figure}[h!]
	\centering
	\includegraphics[width=1\linewidth]{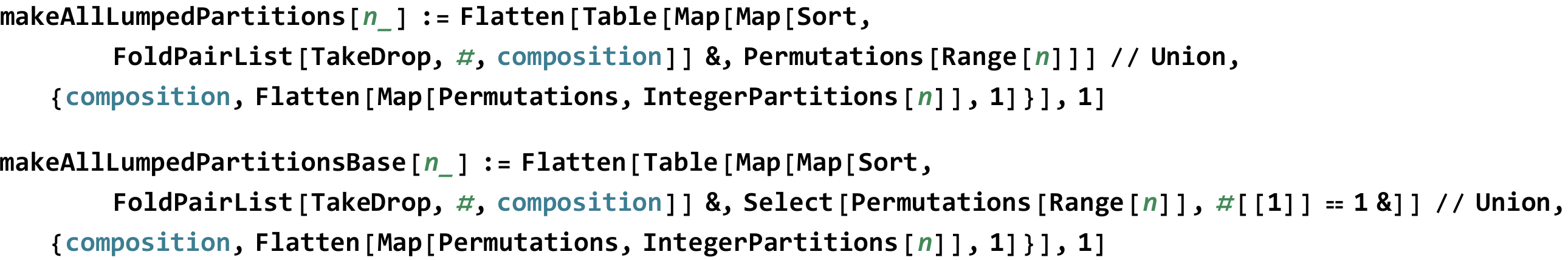}
	\caption{Definitions for Figure \ref{fig:nonplanaramplitudelinearcombinationplates}}
	\label{fig:makealllumpedpartitionsbase}
\end{figure}

\begin{figure}[h!]
	\centering
	\includegraphics[width=1\linewidth]{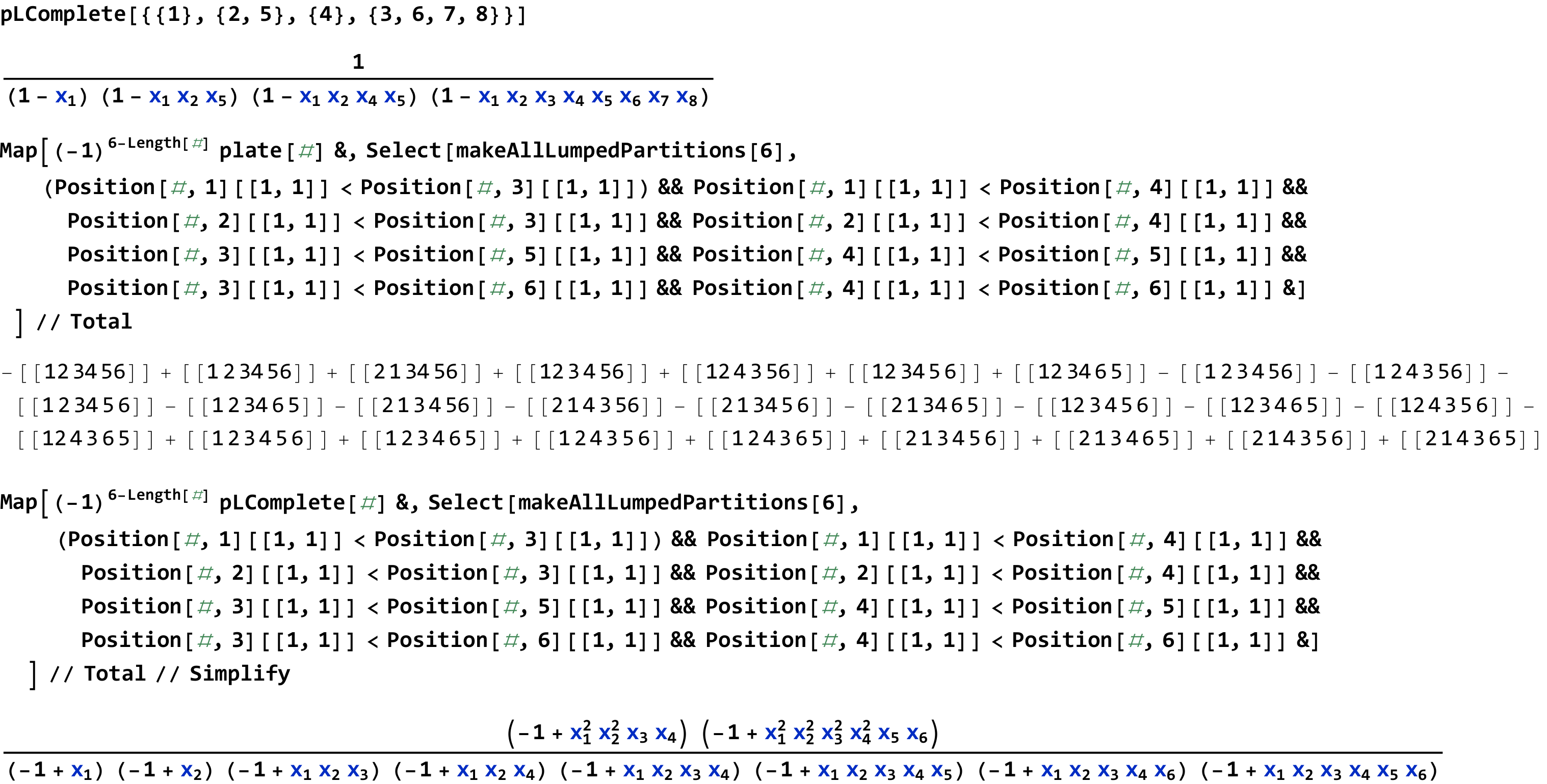}
	\caption{This provides an alternate (more precise) functional representation of the $n=6$ particle case.  NOTE the extra ``lumped'' terms with alternating signs.  It would be interesting to investigate their physical interpretation, perhaps as collinear limits.  Note also that the exponent vectors have integer entries.  We are describing a 6-dimensional cone which projects onto the generalized permutohedral cone from Definition \ref{def:NonplanarPlate}.}
	\label{fig:nonplanaramplitudelinearcombinationplates}
\end{figure}

In Figure \ref{fig:nonplanaramplitudelinearcombinationplates} we depart from the example non-planar amplitude, using instead the \textit{standard} order on the coordinates, summing over plates which satisfy 
$$\{p_1,p_2\}<\{p_3,p_4\}<\{p_5,p_6\}.$$

\begin{figure}[h!]
	\centering
	\includegraphics[width=1\linewidth]{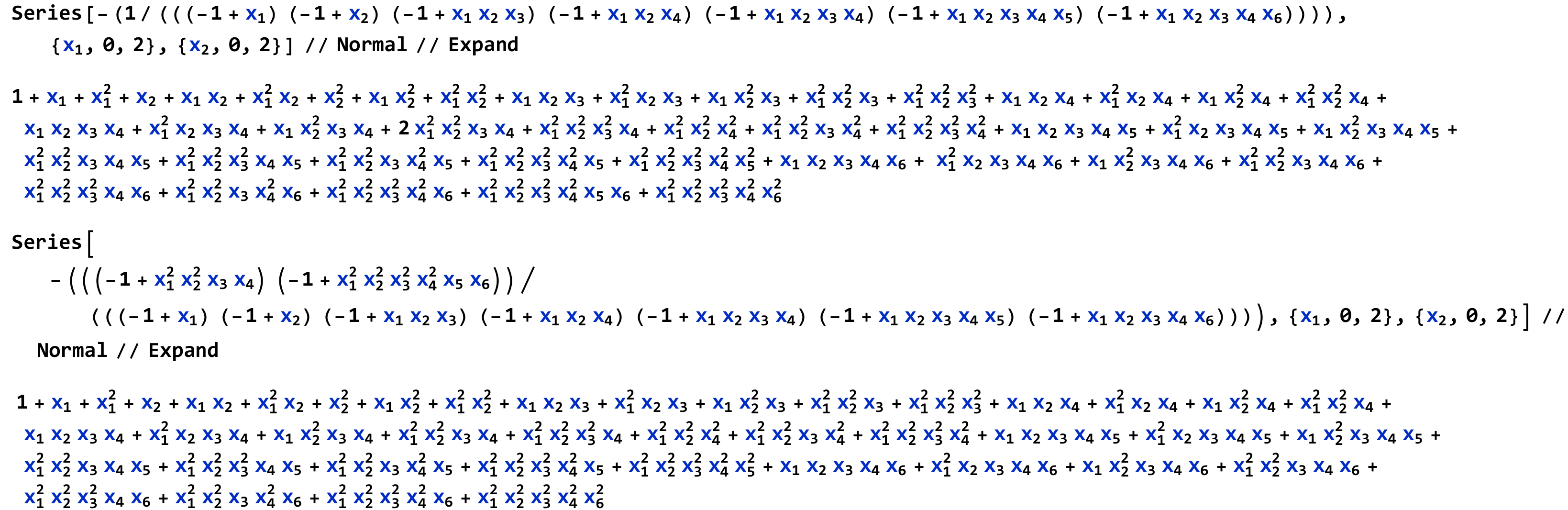}
	\caption{We think of the numerator as a multiplicity function which cancels the over-determined denominator.  The first expression has a coefficient which is not equal to 1; the second does not.  The exponent vectors in the second expression project into the support of the indicator function for the 6-dimensional polytope which is dual to the equations in Definition \ref{def:NonplanarPlate}.  That is, the exponent vectors of the monomials label the integer lattice points in the dual cone, lifted into $\mathbb{R}^6$.  Details for the general construction will be given in a subsequent publication.}
	\label{fig:seriesexpansionnonplanarasplate}
\end{figure}
\newpage
\begin{defn}\label{def:NonplanarPlate}
	Let $x_{ab\cdots c}=x_a+x_b+\cdots +x_c$, as usual.  The degree 2 \textit{standard} nonplanar plate in six variables is cut out by the following equations.  Note that the second column $x_{12}\ge 0$ is redundant.
	$$\left\{x\in\mathbb{R}^6:\begin{array}{ccccccc}
	& x_1 \ge 0 & x_{12}\ge 0& x_{123}\ge 0 & x_{1234}\ge 0 & x_{12345} \ge 0 \\ 
	& x_2\ge 0 & & x_{124}\ge 0 & & x_{12346}\ge 0
	\end{array},x_{123456}=0\right\} $$
\end{defn}

\begin{prop}\label{prop: conical hull presentation cone}
	Let $h_{ij}=e_i-e_j$.  The above degree 2 nonplanar plate in six variables is presented as the \textit{conical hull}
	$$\left\langle e_1-e_3,e_1-e_4,e_2-e_3,e_2-e_4,e_3-e_5,e_3-e_6,e_4-e_5,e_4-e_6\right\rangle_+$$
	$$:=\{t_{13}h_{13}+t_{14}h_{14}+t_{23}h_{23}+t_{24}h_{24}+t_{35}h_{35}+t_{36}h_{36}+t_{45}h_{45}+t_{46}h_{46}:t_{ij}\ge 0\}$$
\end{prop}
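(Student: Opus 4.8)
The plan is to establish the two inclusions separately. Write $C$ for the cone of Definition~\ref{def:NonplanarPlate}, viewed inside the hyperplane $H=\{x\in\mathbb{R}^6:x_{123456}=0\}$, and write $C'$ for the conical hull on the right-hand side of the claimed identity. Since $C$ is an intersection of finitely many closed half-spaces with $H$, it is a closed convex cone, so for $C'\subseteq C$ it suffices to check that each of the eight generators $h_{ij}=e_i-e_j$ lies in $C$. This is a finite verification with essentially no content: every $e_i-e_j$ has coordinate sum $0$, and because in each listed pair $(i,j)$ the first index sits in an earlier block of the ordered blocking $\{1,2\}<\{3,4\}<\{5,6\}$ than the second, each partial sum $x_S$ appearing in the definition takes the value $0$ or $1$ on $h_{ij}$; in particular all the required inequalities hold.

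For the reverse inclusion $C\subseteq C'$ I would show that for every $x\in C$ the system
$$x=t_{13}h_{13}+t_{14}h_{14}+t_{23}h_{23}+t_{24}h_{24}+t_{35}h_{35}+t_{36}h_{36}+t_{45}h_{45}+t_{46}h_{46},\qquad t_{ij}\ge 0,$$
is solvable, by reading it as a flow on the layered network with vertex layers $\{1,2\}$, $\{3,4\}$, $\{5,6\}$ and one arc $i\to j$ for each of the eight pairs. First I would record two consequences of the defining relations: adding $x_{12345}\ge 0$ and $x_{12346}\ge 0$ and using $x_{123456}=0$ yields $x_{1234}\ge 0$, and then $x_5+x_6=-x_{1234}$ together with $x_5=x_{12345}-x_{1234}\ge -x_{1234}$ and $x_6\ge -x_{1234}$ forces $x_5\le 0$ and $x_6\le 0$. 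The flow equations then split by layer: $t_{13}+t_{14}=x_1$ and $t_{23}+t_{24}=x_2$ at the first layer; $t_{35}+t_{45}=-x_5$ and $t_{36}+t_{46}=-x_6$ at the last; and conservation at the middle vertices gives $t_{35}+t_{36}=x_{123}-u$ and $t_{45}+t_{46}=x_4+u$ with $u:=t_{14}+t_{24}$.

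The core of the proof is then the choice of $u$. I claim all eight $t_{ij}$ can be made nonnegative as soon as $u$ lies in $\bigl[\max(0,-x_4),\ \min(x_1+x_2,\ x_{123})\bigr]$: for such a $u$ one first splits it as $t_{14}+t_{24}$ with $t_{14}\in[0,x_1]$, $t_{24}\in[0,x_2]$, sets $t_{13}=x_1-t_{14}$, $t_{23}=x_2-t_{24}$, and then recognizes $t_{35},t_{36},t_{45},t_{46}$ as the entries of a $2\times 2$ transportation table with nonnegative row sums $x_{123}-u$, $x_4+u$ and nonnegative column sums $-x_5$, $-x_6$ (whose totals both equal $x_{1234}$), which always admits a nonnegative solution. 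The interval for $u$ is nonempty precisely because $0\le x_1+x_2$, $0\le x_{123}$, $-x_4\le x_1+x_2$ (that is, $x_{124}\ge 0$), and $-x_4\le x_{123}$ (that is, $x_{1234}\ge 0$) — exactly four of the defining inequalities. I would note in passing that $x_{12}\ge 0$ is never used, which reconfirms its redundancy. Combining the two inclusions gives $C=C'$.

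I do not expect a genuine obstacle here; the one place demanding care is the bookkeeping just sketched, namely checking that the nonnegativity conditions at each vertex of the network collapse exactly onto the nonredundant facet inequalities, so that the feasibility interval for $u$ is nonempty. A more conceptual alternative would be to prove $C^\circ=(C')^\circ$ by a symmetric computation with the polar cones, or to deduce the statement from the general correspondence between (lumped) plates and conical hulls of their edge directions developed in \cite{EarlyNonplanar}; but the direct flow argument above is the most self-contained.
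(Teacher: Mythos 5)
Your argument is correct, and in fact it supplies something the paper does not: Proposition~\ref{prop: conical hull presentation cone} is stated in the appendix with no proof at all, so there is no argument of the author's to compare yours against. Your two inclusions both check out. The easy direction is exactly the observation that each linear functional $x_S$ in Definition~\ref{def:NonplanarPlate} evaluates to $0$ or $1$ on each generator $e_i-e_j$, because no set $S$ occurring there contains the tail $j$ without containing the head $i$. For the harder direction, your layered-network decomposition is sound: the identities $t_{13}+t_{14}=x_1$, $t_{23}+t_{24}=x_2$, $t_{35}+t_{36}=x_{123}-u$, $t_{45}+t_{46}=x_4+u$, $t_{35}+t_{45}=-x_5$, $t_{36}+t_{46}=-x_6$ with $u=t_{14}+t_{24}$ are all correct; the derivation $x_5\le 0$, $x_6\le 0$ from $x_{12345}\ge 0$, $x_{12346}\ge 0$, $x_{123456}=0$ is correct; the $2\times 2$ transportation step is justified since the row totals $x_{123}-u$, $x_4+u$ and column totals $-x_5$, $-x_6$ are nonnegative with equal sums $x_{1234}$; and the feasibility interval $\bigl[\max(0,-x_4),\,\min(x_1+x_2,\,x_{123})\bigr]$ is nonempty precisely by the inequalities $x_{123}\ge 0$, $x_{124}\ge 0$, $x_{1234}\ge 0$ together with $x_1,x_2\ge 0$. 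Your side remark that $x_{12}\ge 0$ is never needed independently confirms the redundancy noted in Definition~\ref{def:NonplanarPlate}. The only cosmetic point worth tightening is that $x_{1234}\ge 0$ is already a listed facet inequality, so rederiving it from the two five-term sums is unnecessary (though harmless).
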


\begin{cor}
	Consider the weight permutohedron which is the convex hull of permutations of $(0,0,1,1,2,2).$  Then, the neighboring vertices are connected by precisely those $e_i-e_j$ listed in Proposition \ref{prop: conical hull presentation cone}.
\end{cor}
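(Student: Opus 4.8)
The plan is to identify the vertex $v=(0,0,1,1,2,2)$ (say, with this particular placement of the repeated entries) and to determine exactly which other permutations of the multiset $\{0,0,1,1,2,2\}$ are adjacent to it on the weight permutohedron $\mathcal{P}$, then to check that the edge directions are precisely the eight vectors $e_i-e_j$ listed in Proposition \ref{prop: conical hull presentation cone}. First I would recall the standard description of edges of a weight permutohedron: two vertices $u,w$ are connected by an edge if and only if $w$ is obtained from $u$ by transposing the values in two coordinate positions $i<j$ such that the resulting point is still a vertex and no third vertex lies on the segment $\overline{uw}$; equivalently, $w-u$ is a positive multiple of some $e_i-e_j$ and the segment is a face. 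For the permutohedron of a generic weight all $e_i-e_j$ with $u_i<u_j$ would appear, but here the repeated entries in $(0,0,1,1,2,2)$ collapse several of these: swapping two positions carrying equal values does nothing, and swapping positions whose values differ by $2$ produces a point that is the midpoint of two other edges (it factors through an intermediate swap by $1$), hence is not an edge. So the surviving edges out of $v$ are exactly the transpositions of a $0$ with a $1$ and of a $1$ with a $2$.

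The second step is bookkeeping: with $v$ having $0$'s in positions $1,2$, ones in positions $3,4$, and twos in positions $5,6$, an edge that raises a $0$ to a $1$ moves along $e_i-e_j$ with $i\in\{3,4\}$ (the position losing value is where the $1$ was) and $j\in\{1,2\}$; up to the overall sign convention $h_{ij}=e_i-e_j$ these are $e_3-e_1,e_3-e_2,e_4-e_1,e_4-e_2$, i.e. $h_{13},h_{23},h_{14},h_{24}$ in the notation of the proposition. Likewise an edge lowering a $2$ to a $1$ (equivalently raising a $1$ to a $2$) gives $e_i-e_j$ with $i\in\{3,4\}$, $j\in\{5,6\}$, namely $h_{35},h_{36},h_{45},h_{46}$. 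Together these are exactly the eight generators $e_1-e_3,e_1-e_4,e_2-e_3,e_2-e_4,e_3-e_5,e_3-e_6,e_4-e_5,e_4-e_6$ listed in Proposition \ref{prop: conical hull presentation cone}, which is the claim.

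The main obstacle is the negative part of the argument --- proving that the ``long'' swaps (a $0$ with a $2$) do \emph{not} give edges, and that the listed eight are \emph{all} of them. I would handle this by exhibiting, for a $0$--$2$ transposition, the explicit linear functional that is maximized on a two-dimensional face containing three vertices ($v$, the intermediate point after a $0$--$1$ swap, and the point after the further $1$--$2$ swap are collinear), so the putative edge lies in the interior of a square face; concretely the functional $\langle x,e_i-e_j\rangle$ for the relevant $i,j$ takes value $-2$ on all three and separates them only up to the intermediate step. Alternatively, and perhaps more cleanly, one can invoke that $\mathcal{P}$ is the generalized permutohedron (a Minkowski sum of a standard permutohedron with a dilated one, or a matroid-type polytope) whose edge directions are known to be among the $e_i-e_j$, and then match the tangent cone at $v$ against the conical hull in Proposition \ref{prop: conical hull presentation cone}: by that proposition the tangent (local) cone at the corresponding vertex of the plate is generated by exactly those eight rays, and since the weight permutohedron's normal fan refines (or equals, near this vertex) the relevant arrangement fan, the edges emanating from $v$ are dual to the facets of that cone, giving the eight directions and no others. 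Once the local cone is pinned down, the corollary is immediate.
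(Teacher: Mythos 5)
The paper gives no proof of this corollary---it is stated bare in the appendix alongside Proposition \ref{prop: conical hull presentation cone}---so your proposal can only be judged on its own terms. Your strategy and conclusion are correct: at the vertex $v=(0,0,1,1,2,2)$ the neighbors are exactly the vertices obtained by transposing a coordinate carrying a $0$ with one carrying a $1$, or a $1$ with a $2$, and the outgoing displacements (new vertex minus $v$) are $e_i-e_j$ with $i\in\{1,2\},\ j\in\{3,4\}$ or $i\in\{3,4\},\ j\in\{5,6\}$, which is precisely the list in the Proposition. (Your bookkeeping reverses the signs---the coordinate that gains value is the one that carried the smaller entry, so a $0$--$1$ swap gives $e_1-e_3$, not $e_3-e_1$---but since you flag the sign convention and edges are unoriented, this is cosmetic.)

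Two details in the crucial negative step need repair. First, the ``collinearity'' justification is false as stated: $v$, $v+(e_1-e_3)$ and $v+2(e_1-e_5)$ are not collinear, since $e_1-e_3$ and $e_1-e_5$ are linearly independent. The correct elementary argument is a midpoint argument: the midpoint $\tfrac{1}{2}\bigl(v+(v+2(e_1-e_5))\bigr)=(1,0,1,1,1,2)$ is also the midpoint of the two vertices $(1,0,0,1,2,2)$ and $(1,0,2,1,0,2)$, neither of which lies on the segment from $v$ to $v+2(e_1-e_5)$; hence that segment is not a face (and in fact the minimal face containing its endpoints is three-dimensional, not a square). Second, your alternative route is close to circular: Proposition \ref{prop: conical hull presentation cone} describes the plate, and identifying its conical hull with the tangent cone of the permutohedron at $v$ is essentially the content of the corollary being proved. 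The clean version is to compute the normal cone of the orbit polytope at $v$ directly from the rearrangement inequality: it is $\{c:\max(c_1,c_2)\le\min(c_3,c_4),\ \max(c_3,c_4)\le\min(c_5,c_6)\}$, whose eight inequalities $c_i\le c_j$ are irredundant facets corresponding bijectively to the eight edges at $v$ in directions $e_i-e_j$, while a $0$--$2$ wall such as $c_1=c_5$ meets this cone only in codimension at least two and so contributes no edge. With either repair the proof is complete.
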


\section{Nonplanar $n=9$ particle case}

\begin{figure}[h!]
	\centering
	\includegraphics[width=1\linewidth]{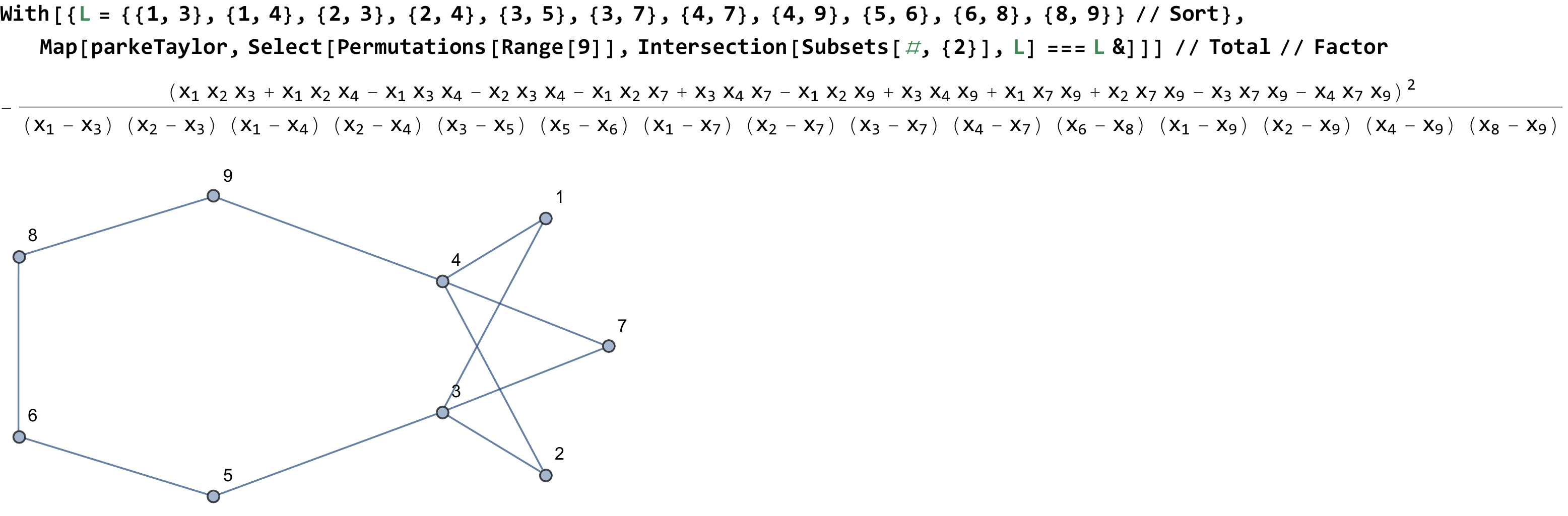}
	\caption{The numerator for the 9-particle fraction has the same form as the 6-particle case.  The ordered pairs here were obtained by ad hoc methods.  }
	\label{fig:9-particle-nonplanar}
\end{figure}

\section{Edge graphs for Wreath Permutohedra}

Recall the notation from \cite{EarlyPlateAnnouncement} where $B_{a,b}$ denotes the hypersimplex formed from the convex hull of permutations of the vector with $a$ 1's and $b$ 0's $(1,\ldots, 1,0,\ldots, 0)$.

Below we present some sample Mathematica code to visualize the Newton polytope for $\mathcal{C}_{12,34,56}$ and a candidate higher dimensional analog.

For concreteness, in Figure \ref{fig:edge-graph-wreath-permutohedron-degenerate}, the edge directions for the cone of the Newton polytope extending away from the vertex $(0,0,0,1,1,1)$ toward its wreath product permutations, are
$$e_1-e_4,e_2-e_4,e_3-e_4,e_3-e_5,e_3-e_6.$$

\begin{figure}[h!]
	\centering
	\includegraphics[width=.95\linewidth]{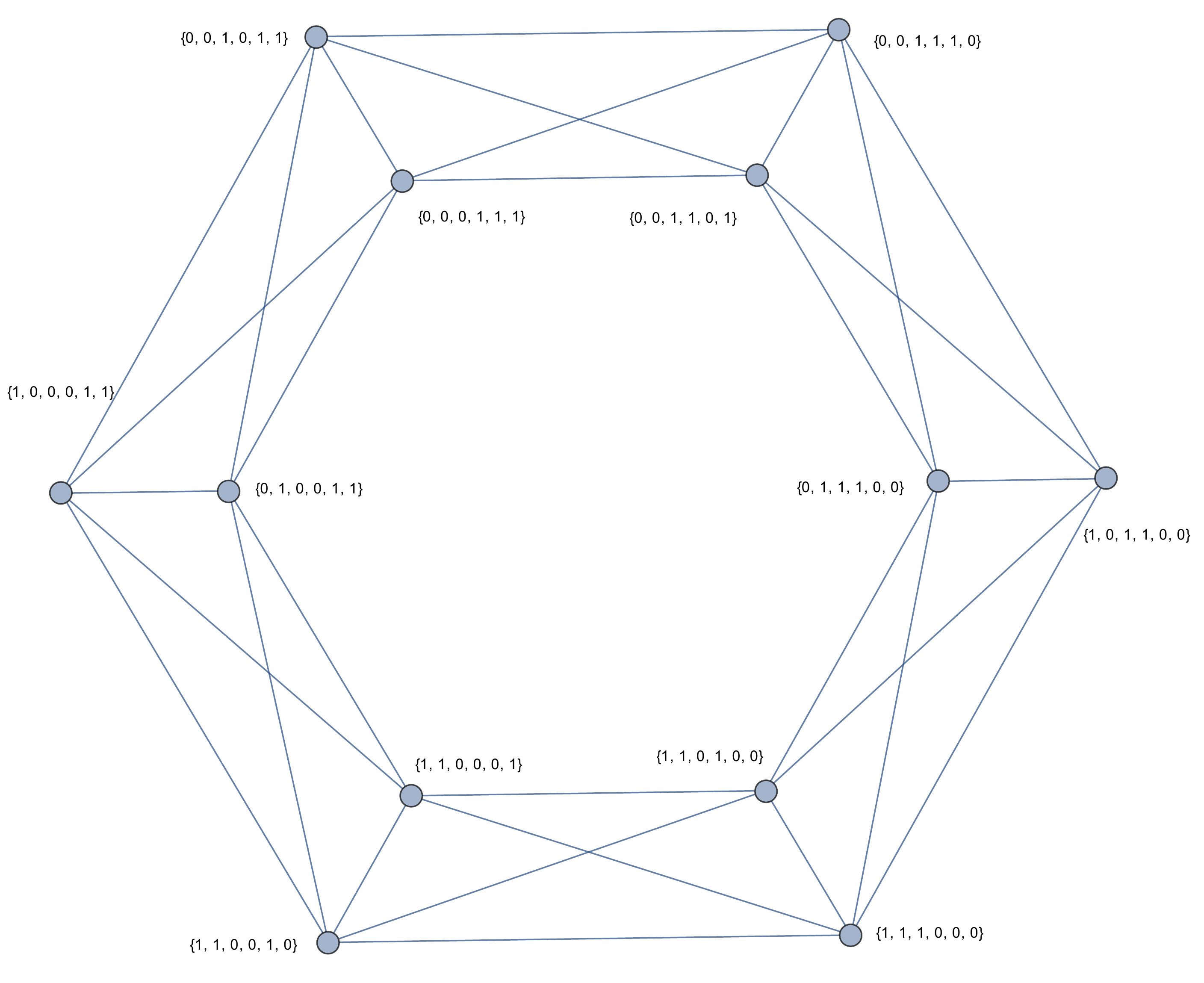}
	\caption{Degenerate Wreath Product Permutohedron for $\text{Wr}(S_2,S_3)$, edge graph of the Newton polytope for $\mathcal{C}_{12,34,56}$.  Two vertices above are connected by an edge if their difference is a root $e_i-e_j$.  The polytope is convex hull of the $\text{Wr}(S_2,S_3)$-orbit of $(0,0,0,1,1,1)$, it is a sub-polytope of the hypersimplex $B_{3,3}$, and it projects onto the permutohedron in $3$ coordinates, the convex hull of permutations of $(0,1,2)$, under the lumping map $(x_1,\ldots, x_6)\mapsto (x_{12},x_{34},x_{56})$, where $x_{ij}=x_i+x_j$.  Remark: one can check that the square faces in the diagram are tetrahedra.}
	\label{fig:edge-graph-wreath-permutohedron-degenerate}
\end{figure}

\begin{figure}[h!]
	\centering
	\includegraphics[width=1\linewidth]{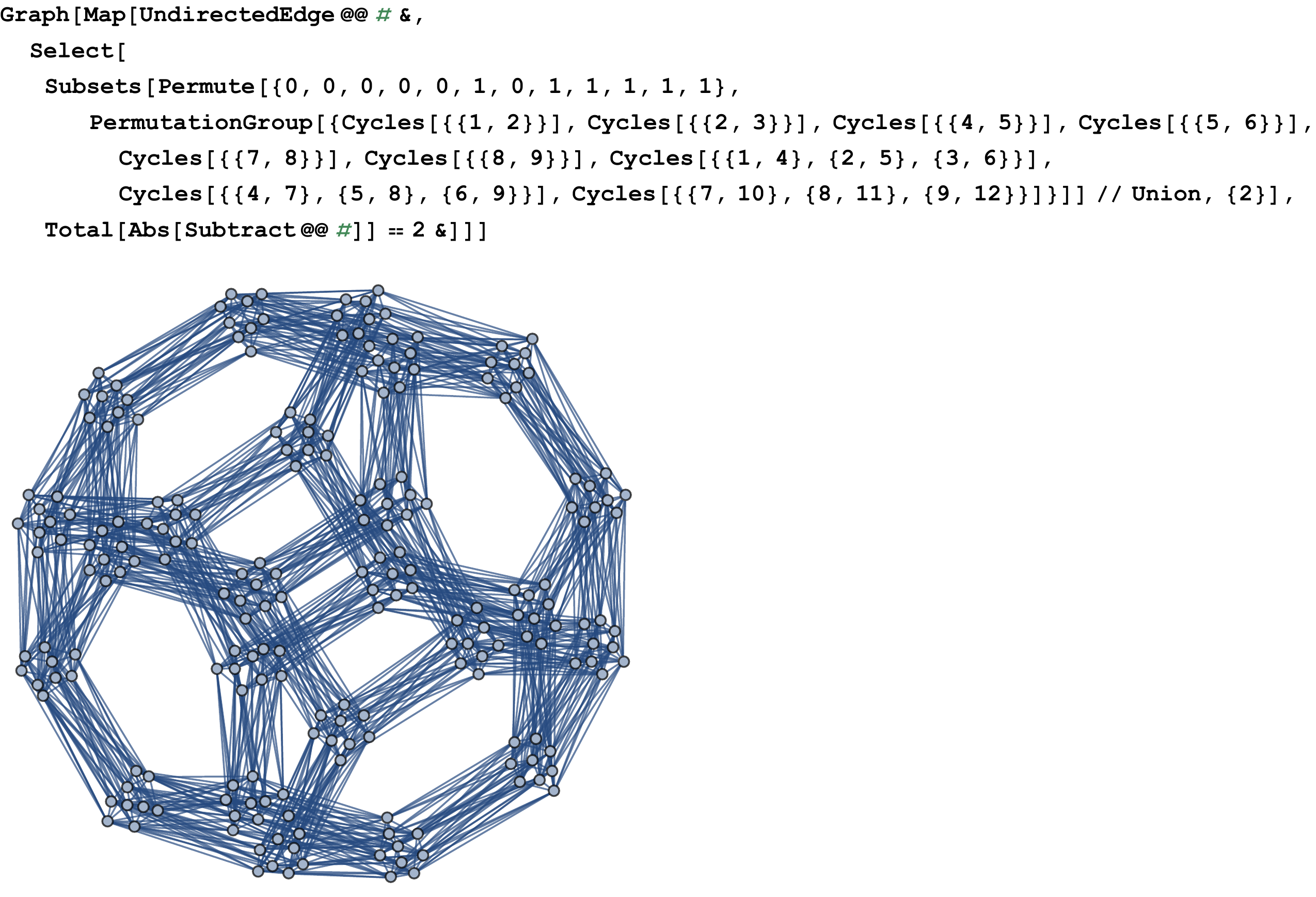}
	\caption{Candidate for a higher dimensional non-planar leading singularity: edge graph for the convex hull of the $\text{Wr}(S_3,S_4)$-orbit of the point $(0,0,0,0,0,1,0,1,1,1,1,1)$ in the hypersimplex $B_{6,6}$.  This is the Newton polytope for the candidate higher dimensional analog $\mathcal{C}_{123,456,789,10 11 12}$ arising from the dual $SL_4$ determinant.  As in the $n=3$ case, there is an explicit projection onto the permutohedron in $n=4$ coordinates, the convex hull of permutations of $(0,1,2,3)$.  \textit{One would like to study functional representations of this and other weight permutohedra which correspond to tensor product invariants, on Parke-Taylor factors and more generally.}}
	\label{fig:edge-graph-wreath-permutohedron-dim-4}
\end{figure}

\end{document}